\theoremstyle{plain}
\newtheorem{theorem}{Theorem}
\newtheorem{lemma}[theorem]{Lemma}
\newtheorem{fact}[theorem]{Fact}
\newtheorem{proposition}[theorem]{Proposition}
\newtheorem{conjecture}[theorem]{Conjecture}
\theoremstyle{definition}
\newtheorem*{definition}{Definition}
\theoremstyle{remark}
\title{Antipowers in Uniform Morphic Words and the Fibonacci Word}
\author{Swapnil Garg\thanks{Work supported by NSF/DMS grant 1659047 and NSA grant H98230-18-1-0010.}}
\affiliation{Massachusetts Institute of Technology, Cambridge, MA}
\keywords{antipower, Fibonacci word, morphic word}
\date{June 2021}
\begin{document}
\publicationdetails{23}{2021}{3}{14}{7134}

\maketitle

\begin{abstract}
Fici, Restivo, Silva, and Zamboni define a $k$-antipower to be a word composed of $k$ pairwise distinct, concatenated words of equal length. Berger and Defant conjecture that for any sufficiently well-behaved aperiodic morphic word $w$, there exists a constant $c$ such that for any $k$ and any index $i$, a $k$-antipower with block length at most $ck$ starts at the $i$-th position of $w$. They prove their conjecture in the case of binary words, and we extend their result to alphabets of arbitrary finite size and characterize those words for which the result does not hold. We also prove their conjecture in the specific case of the Fibonacci word.
\end{abstract}

\section{Introduction}
This paper settles certain cases of a conjecture posed by \cite{berdef} concerning antipowers, first introduced by \cite{fici}. They define a $k$-antipower to be a word that is the concatenation of $k$ pairwise distinct blocks of equal length. For example, $011000$ is a $3$-antipower, as $01, 10, 00$ are pairwise distinct. A variety of papers have been produced on the subject in the following years including \cite{inform, amanda, colin, ficipostic, marisa, represent, shyam}, with \cite{colin, marisa, shyam} finding bounds on antipower lengths in the Thue-Morse word.

Clearly one can construct periodic words without long antipowers, but what about other words? An \textit{aperiodic} infinite word is defined as a word with no periodic suffix, and an infinite word $w$ is \textit{recurrent} if every finite substring of $w$ appears in $w$ infinitely many times as a substring. We say $w$ is \textit{uniformly recurrent} if for every integer $a$, there is a larger integer $b$ such that every length-$a$ substring of $w$ appears as a substring in every length-$b$ substring of $w$. Here, substrings are sequences of consecutive letters in a word. Fici et al.\ asked whether such words can avoid long antipowers, and came to the following conclusion:

\begin{theorem}[\cite{fici}]
\label{thm:1}
\begin{itemize}
    \item Every infinite aperiodic word contains a $3$-antipower.
    \item There exist infinite aperiodic words avoiding $4$-antipowers.
    \item There exist infinite recurrent aperiodic words avoiding $6$-antipowers.
\end{itemize}
\end{theorem}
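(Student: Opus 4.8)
This statement has three essentially independent parts, which I would attack separately.

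\emph{Every infinite aperiodic word contains a $3$-antipower.} I would prove the contrapositive: a word $w$ avoiding $3$-antipowers must be eventually periodic, hence is not aperiodic. The engine is that, for each $n\ge 1$, the three consecutive length-$n$ blocks $w[0..n-1]$, $w[n..2n-1]$, $w[2n..3n-1]$ cannot be pairwise distinct, so one equals another; recast in terms of periods, this says that the prefix $w[0..3n-1]$ has period $n$ on its first $2n$ symbols or on its last $2n$ symbols, or has period $2n$ throughout. If the alternative ``$w[0..2n-1]$ has period $n$'' holds for every $n$, a routine doubling argument forces $w=(w[0])^{\omega}$. Otherwise I would fix an $n$ at which it fails and then induct on position, repeatedly invoking the absence of a $3$-antipower at positions $n,2n,3n,\dots$ with block lengths $n$ and $2n$ and amalgamating the resulting overlapping equal-length periodicities (a Fine--Wilf phenomenon) into a single period valid from some index onward. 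Everything is elementary; the only genuine work is the bookkeeping of which of the few possible coincidence patterns occurs at each scale and checking that they interlock.

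\emph{There exist infinite aperiodic words avoiding $4$-antipowers.} I would exhibit an explicit word over $\{0,1\}$: let $w[m]=1$ exactly when $m$ is a power of $6$, and $w[m]=0$ otherwise, so the $1$'s sit at $1,6,36,216,\dots$. Since the gaps between consecutive $1$'s are $5,30,180,\dots$ and hence unbounded, $w$ has no periodic suffix and is aperiodic. To rule out $4$-antipowers, fix $n\ge 1$ and a start position $i$ and look at the four length-$n$ sub-blocks partitioning the window $[i,i+4n-1]$. I claim the $1$'s inside this window lie in at most two of these sub-blocks. If three of them each contained a $1$, picking one $1$ from each would yield $1$'s $q_0<q_1<q_2$ lying in three distinct sub-blocks $B_{j_0},B_{j_1},B_{j_2}$ with $j_0<j_1<j_2$; then $q_1\ge i+j_1n\ge i+n$ while $q_2\le i+(j_2+1)n-1\le i+4n-1$, yet $q_2$ and $q_1$ are distinct powers of $6$, so $q_2\ge 6q_1\ge 6(i+n)>i+4n$, a contradiction. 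Hence at least two of the four sub-blocks equal $0^{n}$, so the four blocks are not pairwise distinct, and $w$ has no $4$-antipower. (The same count shows the $1$'s in a length-$6n$ window never occupy five of its six sub-blocks, so this $w$ even avoids $6$-antipowers --- but it is \emph{not} recurrent, since the factor $1\,0^{\,5\cdot 6^{k}-1}\,1$ occurs exactly once, which is why the last part needs a genuinely different idea.)

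\emph{There exist infinite recurrent aperiodic words avoiding $6$-antipowers.} Here the difficulty is to keep the ``special symbol is sparse at every scale'' feature that made the previous construction work while forcing every factor to recur, and the two pull against each other (uniform recurrence already forces every occurring symbol to have positive density, and the pure sparse word fails recurrence precisely because its inter-$1$ gaps all have multiplicity one). I would aim for a self-similar ($S$-adic or Toeplitz-type) word: for instance build $w=\lim_k v_k$ where $v_{k+1}$ is a long power of $v_k$ followed by a short new block, choosing the powers and new blocks so that (i) every factor, appearing already inside some $v_k^{\,m}$ with $m$ large, recurs; (ii) at block lengths $n$ below the current period the $1$'s still look like those of the sparse word of the second part, so five of any six consecutive length-$n$ blocks cannot all contain a $1$; and (iii) at block lengths comparable to or larger than a period, the power structure itself supplies two equal blocks among any six consecutive ones. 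One then checks aperiodicity (the periods $|v_k|$ are unbounded) and that the seams between successive powers introduce no $6$-antipower. The loss from avoiding $4$-antipowers to only avoiding $6$ reflects exactly the room given up to accommodate recurrence. I expect this third part to be the main obstacle: producing an explicit recurrent word and simultaneously verifying its recurrence, its aperiodicity, and the $6$-antipower bound at \emph{all} block lengths is delicate, and most of the effort would go there, whereas the first part is routine once one commits to the case analysis and the second part is the cleanest of the three.
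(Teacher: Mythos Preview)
The paper does not prove this theorem at all: it is quoted from Fici, Restivo, Silva, and Zamboni \cite{fici} as background, with no argument given. So there is no in-paper proof to compare your proposal against.

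On the substance of your sketch: your second construction (the characteristic word of the powers of $6$) is clean and the counting argument is correct; it is a perfectly good witness for the existence of an aperiodic word avoiding $4$-antipowers, and indeed different from the construction in \cite{fici}. Your first and third parts, by contrast, are outlines rather than proofs. For the $3$-antipower claim you gesture at a case analysis and a Fine--Wilf amalgamation without carrying it out; this can be made to work, but the ``bookkeeping'' you defer is the entire content. For the recurrent $6$-antipower-free word you correctly identify the tension between sparsity and recurrence and propose an $S$-adic limit, but you do not specify the parameters or verify any of (i)--(iii); as written this is a plan, not an argument, and the devil is exactly in the seam analysis you acknowledge is delicate. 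If you intend to supply a self-contained proof rather than cite \cite{fici}, the third part needs an explicit word and a full verification.
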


Berger and Defant complete this question with the following theorem:

\begin{theorem}[\cite{berdef}]
\label{thm:2}
Every infinite aperiodic recurrent word contains a $5$-antipower.
\end{theorem}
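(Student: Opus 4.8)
The plan is to argue by contradiction: suppose $w$ is infinite, aperiodic, recurrent, and contains no $5$-antipower. Since $w$ is aperiodic, the first part of the Fici--Restivo--Silva--Zamboni theorem gives a $3$-antipower in $w$; fix its block length $\ell$, so there is a position at which three pairwise distinct length-$\ell$ blocks $X_1, X_2, X_3$ occur consecutively. Because $w$ is recurrent, the factor $X_1 X_2 X_3$ occurs infinitely often, so I may pick an occurrence, say at position $p$, preceded by at least $2\ell$ letters of $w$. Write the consecutive length-$\ell$ blocks of $w$ near $p$ as $\dots, X_{-1}, X_0, X_1, X_2, X_3, X_4, X_5, \dots$ (indexed by block number, with $X_1, X_2, X_3$ pairwise distinct and $X_4, X_5, \dots$ available since $w$ is infinite).

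First I would extract the local consequences of the no-$5$-antipower hypothesis. Applying it at the block-start positions corresponding to $X_{-1}, X_0, X_1, \dots$ with block length $\ell$ forces, in every window of five consecutive blocks $X_j, X_{j+1}, X_{j+2}, X_{j+3}, X_{j+4}$, a coincidence $X_a = X_b$ with $a<b$. Since $X_1, X_2, X_3$ are pairwise distinct, in the windows meeting $\{X_1, X_2, X_3\}$ the coinciding pair must involve one of the flanking blocks $X_{-1}, X_0, X_4, X_5$; a short case analysis then pins down how these flanking blocks equal members of $\{X_1, X_2, X_3\}$ (or each other), so that the blocks around $p$ repeat with a short ``period'' measured in block units on a stretch surrounding $p$.

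The core of the argument is to upgrade these soft coincidences into a genuine period, since a periodic suffix contradicts aperiodicity. The difficulty is that an equality $X_a = X_b$ only asserts that $w$ coincides with a shift of itself on one bounded stretch, and such coincidences can in principle be arranged forever --- this is precisely why $6$-antipowers can be avoided, witnessed by the recurrent aperiodic word of Fici et al. To force periodicity I would also apply the hypothesis with block lengths near $\ell$ but not equal to it, and with block boundaries shifted by each of $1, 2, \dots, \ell-1$, invoking recurrence each time to supply a fresh occurrence of $X_1 X_2 X_3$ with whatever flanking room the chosen window requires. Overlapping the periodicity constraints produced this way and applying the Fine--Wilf theorem whenever the implicated periods are short relative to the overlap should collapse them to a single period valid on a semi-infinite part of $w$, yielding the contradiction.

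I expect this last step to be the main obstacle. The window-of-five coincidences are genuinely weak, so the passage from ``blocks coincide in some pattern'' to ``$w$ has a periodic suffix'' is where the real combinatorics lives: one must choose the block lengths, offsets, and recurrence-supplied occurrences so that the resulting Fine--Wilf inputs actually overlap sufficiently, while keeping the branching case analysis --- over which pairs of blocks coincide, and at which scale --- under control. The secondary bookkeeping point, namely always having enough letters of $w$ on both sides of the relevant window, is handled throughout by relocating to a suitable occurrence of $X_1 X_2 X_3$ via recurrence.
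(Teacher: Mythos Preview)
The paper does not actually prove this theorem: it is quoted as a result of Berger and Defant and simply cited as \cite{berdef}, with no argument given here. So there is no ``paper's own proof'' to compare against; the comparison you are asking for cannot be made from this source alone.

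As to your proposal itself, it is not a proof but a plan, and you have correctly identified where it breaks down. The first two paragraphs are fine set-up: locating a $3$-antipower $X_1X_2X_3$ of block length $\ell$, using recurrence to place it with flanking room, and reading off coincidences among $X_{-1},X_0,\dots,X_5$ from the no-$5$-antipower hypothesis at a single scale and offset. But the decisive step --- turning a web of local equalities $X_a=X_b$ into an honest periodic suffix --- is left at the level of ``apply the hypothesis at nearby block lengths and shifted offsets, then invoke Fine--Wilf and hope the overlaps suffice.'' You yourself flag this as the main obstacle and note that the existence of recurrent aperiodic words avoiding $6$-antipowers shows such coincidence patterns do \emph{not} automatically force periodicity. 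That is exactly the gap: nothing in your outline explains which block lengths and offsets to choose, why the resulting periodic stretches overlap enough for Fine--Wilf to apply, or how the case analysis terminates rather than proliferating. Until that step is made precise, what you have is a heuristic rather than a proof.
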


Berger and Defant then investigated whether more restrictions on words can force the inclusion of large antipowers. Specifically, they look at morphic words. We denote the size of the finite alphabet $A$ as $m$. Infinite words over $A$ are infinite to the right, so prefixes of infinite words are finite, while suffixes are infinite. Let $A^*$ denote the set of finite words over $A$, let $A^{\omega}$ be the set of infinite words over $A$, and let $A^{\infty}=A^* \cup A^{\omega}$. A \textit{morphism} of $A^{\infty}$ is a function $\mu: A^{\infty} \rightarrow A^{\infty}$ such that for words $u, v$ with $u$ finite, $\mu(uv)=\mu(u)\mu(v)$. So, $\mu$ is determined by its values on the letters in $A$. Let $|w|$ denote the length of a word $w$. An $r$-\textit{uniform} morphism is one where $|\mu(a)|=r$ for all $a \in A$, and a morphism that is $r$-uniform for some $r$ is called \textit{uniform}. For the rest of this paper, we assume $r \ge 2$ for an $r$-uniform morphism.

A morphism $\mu$ is called \textit{prolongable} on $a$ if $\mu(a)$ starts with the letter $a$. If we have such a $\mu$ and $a$, then repeatedly applying $\mu(a)$ results in a limiting infinite word $\mu^{\omega}(a)$. We work with an infinite pure morphic word $w$, which is equal to $\mu^{\omega}(a)$ for some $a \in A$ and $r$-uniform morphism $\mu$ that is \textit{prolongable} on $a$. As $\mu(a)$ begins with $a$, we have that $\mu^n(a)$ is a prefix of $\mu^{n+1}(a)$, so $\mu^{\omega}(a)$ is well-defined as the limit of $\mu^n(a)$ as $n$ goes to $\infty$.

For example, consider the Thue-Morse word $\textbf{t}$, defined as $\mu^{\omega}(0)$ for $\mu(0)=01, \mu(1)=10$. Here $\mu$ is a $2$-uniform morphism, which is prolongable on $0$ (and $1$). We have that $\textbf{t}=0110100110010110\cdots$.

\begin{conjecture}[\cite{berdef}]
\label{conj:3}
Every sufficiently well-behaved morphic word $w$ has a constant $c$ such that for any $k$, a $k$-antipower with block length at most $ck$ starts at each index of $w$.
\end{conjecture}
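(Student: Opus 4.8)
The statement as phrased is a conjecture about all ``sufficiently well-behaved'' morphic words; my plan is to establish it in the two regimes the rest of the paper treats: arbitrary uniform pure morphic words (with an exact description of the exceptions), building on the binary case of Berger and Defant \cite{berdef}, and the Fibonacci word. Throughout, write $w=\mu^\omega(a)$ for an $r$-uniform morphism prolongable on $a$, and $w=w_0w_1w_2\cdots$ over an alphabet of size $m$. Since an aperiodic uniform morphic word has at most linear factor complexity, a $k$-antipower needs $\Omega(k)$ distinct length-$b$ factors, so $b=\Omega(k)$ is forced; block length $\Theta(k)$ is exactly the right scale, and the content is an $O(k)$ \emph{upper} bound. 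The engine is the self-similar decomposition $w[j r^n\ldots(j+1)r^n)=\mu^n(w_j)$: fix $n$ with $r^n\le k<r^{n+1}$, so $r^n=\Theta(k)$, and hunt for a block length $b$ just above $r^n$, so that $b\le 2k\le ck$ automatically once $c\ge 2$.

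The heart of the uniform case: writing the target index as $i=qr^n+s$ with $0\le s<r^n$ and choosing $b=r^n+d$ with $1\le d<r^n$, a short computation shows the $j$-th block of the candidate antipower at $i$ is determined by two or three \emph{consecutive} macro-letters of $w$ near index $q+j$ together with the single offset $(s+jd)\bmod r^n$; as $j$ runs over $[0,k)$ these macro-indices and offsets sweep out bounded-length windows, so the whole question collapses to whether a certain ``decorated factor'' of $w$ itself is a generalized $k$-antipower. I would then argue that, provided $w$ is \emph{good}---meaning $w$ has no arbitrarily long factors of bounded period (equivalently, a combinatorial condition on $\mu$: some power $\mu^t$ produces no short-period blocks)---one can select $d$ from its $\Theta(r^n)=\Theta(k)$ admissible values so that the map sending a macro-window to a block is injective along the relevant arithmetic progression, which finishes the proof. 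I expect the main obstacle to be exactly this collision analysis: ruling out, uniformly in $n$, that $\mu^n(x)$ and $\mu^n(y)$ agree on a long common prefix or suffix for distinct macro-letter windows. This is where the ``good'' hypothesis gets used, and it is also where passing from alphabet size $2$ to $m$ hurts---there are more ways for the word to degenerate, and the exceptional set must be pinned down exactly rather than merely excluded.

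For the negative direction I would show the exceptions are precisely the ``bad'' words above. If $w$ contains, for every $N$, a factor of period at most $P$ and length at least $N$, then placing $i$ deep inside such a factor forces every $k$-antipower at $i$ (for $k>P$) to have block length $\gtrsim N/k$, which is unbounded, so no constant $c$ works; conversely the previous paragraph handles every good $w$. Translating this dichotomy into an intrinsic, checkable criterion on $\mu$ is a separate bookkeeping step using the combinatorial structure of uniform morphisms (which letters are ``bounded'', periodicity of the derived word structure, and so on).

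The Fibonacci word is not uniform, so it needs its own treatment: replace the $r$-adic decomposition by the Zeckendorf / golden-mean self-similarity, so every index has a canonical description via the finite Fibonacci words $f_n$ with $|f_n|=F_n$. Using the known exact structure of Fibonacci factors---bounded critical exponent $2+\varphi$, the explicit recurrence function, and the description of return words---one can, given $i$ and $k$, pick $n$ with $F_n=\Theta(k)$ and exhibit an explicit block length $b=\Theta(k)$ whose $k$ blocks at $i$ must be pairwise distinct, since a collision would manufacture a factor of $w$ forbidden by one of these structural facts. As in the uniform case, the delicate point is uniformity over \emph{all} starting positions $i$: the self-similar structure must first reduce an arbitrary $i$ to boundedly many normal forms, after which the Sturmian combinatorics does the rest.
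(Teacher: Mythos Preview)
Your plan correctly identifies that the conjecture, as stated, is aspirational, and targets the same two regimes the paper actually settles. But in both regimes your proposed argument diverges from the paper's and leaves the decisive step open.

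For the uniform case, the paper does \emph{not} try to pick a clever offset $d$ and analyze collisions between decorated macro-windows. Instead it proves a rigidity lemma (Lemma~8): if $s$ is a factor of $w$ containing every length-$2$ factor of $w$ (padded by one letter on each side), and $\mu^n(s)$ occurs in $w$ at an index congruent to $i \pmod{r^n}$, then $\gcd(i,r^n) > r^n/m^2$. The proof iterates a shift-by-$i$ map inside the set $\{\mu^n(a):a\in A\}$ and then invokes the Morse--Hedlund factor-complexity bound (Lemma~7) to get a contradiction when $\gcd(i,r^n)$ is small. Theorem~5 follows by taking $n$ with $r^n/m^2\ge k$ and block length roughly $y\cdot r^n$, where $y$ is the uniform-recurrence constant for $s$: two equal blocks would force a copy of $\mu^n(s)$ at an offset with small gcd. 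Note the resulting $c$ depends on $y$ and $m$; your sketch appears to target $c=2$ universally, which is stronger than anything the paper claims. The step you yourself flag as the main obstacle---choosing $d$ so that the block map is injective---is the entire difficulty, and you give no mechanism for it; the paper bypasses it completely via Lemma~8. Your ``good'' hypothesis (no arbitrarily long factors of bounded period) is also not the paper's hypothesis (aperiodic and uniformly recurrent), and the paper's classification of exceptions (Theorem~13: the periodic cases are exactly those whose minimal period has all distinct letters) is obtained by a direct argument about $\mu$-images and coprimality of the period with $r$, not via your bounded-period dichotomy.

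For the Fibonacci word, the paper again takes a concrete route rather than invoking critical exponents or return words. It uses the formula $\mathbf{f}_n = 2 - (\lfloor (n+2)\phi\rfloor - \lfloor (n+1)\phi\rfloor)$ and works on $\mathbb{R}/\mathbb{Z}$: two length-$2F_n$ blocks separated by $\ell\cdot 2F_n$ differ whenever the maximal gap among the points $\{\{(x+j)\phi\} : 1\le j\le 2F_n+1\}$ is smaller than $\min(\{\ell\cdot 2F_n\phi\},1-\{\ell\cdot 2F_n\phi\})$. A direct three-distance-style computation bounds that gap by $\phi^{-n+1}$, yielding an $\lfloor F_n\sqrt{5}/2\rfloor$-antipower with block length $2F_n$ at every index and hence $c\le 4\phi/\sqrt{5}$. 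Your proposal might be workable, but it is a different argument and contains no actual computation; in particular, the ``uniformity over all $i$'' that you single out as delicate falls out immediately from the circle-rotation picture, with no normal-form reduction needed.
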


They settle Conjecture~\ref{conj:3} in a certain special case, proving the following theorem:

\begin{theorem}[\cite{berdef}]
\label{thm:4}
Every aperiodic, uniformly recurrent binary word $w$ generated by a uniform morphism has a constant $c$ such that for any $k$, a $k$-antipower with block length at most $ck$ starts at each index of $w$.
\end{theorem}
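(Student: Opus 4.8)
The plan is to exhibit a single constant $c$, depending only on $w$, such that for every index $i$ and every $k$ there is a block length $b\le ck$ for which the $k$ consecutive length-$b$ factors $B_j:=w[i+jb\,..\,i+(j+1)b-1]$, $0\le j<k$, are pairwise distinct; this is precisely the statement that a $k$-antipower of block length $\le ck$ starts at position $i$. Write $w=\mu^{\omega}(a)$ with $\mu$ an $r$-uniform morphism prolongable on $a$, and write $w=a_0a_1a_2\cdots$. The basic structural tool is the \emph{level-$n$ decomposition} coming from $w=\mu^n(w)$: the word splits as $w=\mu^n(a_0)\mu^n(a_1)\mu^n(a_2)\cdots$, with the block $\mu^n(a_q)$ occupying the positions $[qr^n,(q+1)r^n)$. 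Since $w$ is binary and aperiodic we have $\mu(0)\ne\mu(1)$, so $\mu$, and hence every $\mu^n$, is injective on $A^*$.

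Next I would pin down the right amount of self-synchronization. Using that $w$ is binary, aperiodic and uniformly recurrent one checks that $\mu$ is primitive (for instance, $\mu(1)=1^r$ would force arbitrarily long runs of $1$ in $w$, contradicting uniform recurrence, and $\mu(1)=0^r$ forces $\mu^2(1)=\mu(0)^r$ to contain both letters; in all remaining cases $\mu$ itself sends each letter to a word containing both letters). Hence Moss\'e's recognizability theorem applies and furnishes a constant $L$ such that all occurrences of any fixed length-$L$ factor of $w$ begin at positions in a single residue class modulo $r$. Iterating through successive desubstitutions, I would extract the statement I actually use: there is a constant $m_0$, depending only on $w$, so that for all $n\ge m_0$, every length-$r^{\,n}$ factor $u$ of $w$ determines the residue modulo $r^{\,n-m_0}$ of the starting position of every occurrence of $u$. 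Informally, $u$ first determines its position modulo $r$; the concatenation of the level-$1$ blocks lying entirely inside $u$ then equals $\mu(v)$ for a factor $v$ of $w$ of length $\approx r^{\,n-1}$, recovered from $u$ by injectivity of $\mu$; applying the same reasoning to $v$ one level up and repeating $n-m_0$ times pins down the position of $u$ modulo $r^{\,n-m_0}$.

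With the lemma in hand the argument is short. Given $i$ and $k$, choose $n$ minimal with $r^{\,n-m_0}>k$; then $n>m_0$ and $r^{\,n}\le r^{\,m_0+1}k$. The interval $[r^{\,n},2r^{\,n})$ contains at least $r$ consecutive integers, hence an integer $b$ with $\gcd(b,r)=1$; fix such a $b$ and set $c:=2r^{\,m_0+1}$, so that $b<2r^{\,n}\le ck$. Now suppose $B_j=B_{j'}$ for some $0\le j<j'<k$. Since $b\ge r^{\,n}$, the length-$r^{\,n}$ prefixes of $B_j$ and $B_{j'}$ are equal length-$r^{\,n}$ factors of $w$ occurring at positions $i+jb$ and $i+j'b$; by the lemma these positions agree modulo $r^{\,n-m_0}$, i.e.\ $(j'-j)b\equiv 0\pmod{r^{\,n-m_0}}$. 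As $\gcd(b,r^{\,n-m_0})=1$, this forces $r^{\,n-m_0}\mid j'-j$, which is impossible since $0<j'-j<k<r^{\,n-m_0}$. Hence $B_0,\dots,B_{k-1}$ are pairwise distinct and $b\le ck$, as required.

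I expect the main obstacle to be the iterated synchronization lemma of the second paragraph, i.e.\ promoting the basic ``modulo $r$'' recognizability of $\mu$ to a ``modulo $r^{\,n-m_0}$'' statement at every scale with only an additive-constant loss $m_0$ in the exponent: one must check that the bounded amount of length lost to boundary blocks at each of the $\Theta(n)$ desubstitution steps does not accumulate badly (the losses form a convergent geometric series, so it suffices to take $r^{\,m_0}$ a fixed amount larger than $L$, but this needs to be carried out carefully). A secondary, more conceptual point is justifying the recognizability input at all; the cleanest route in the stated generality is the primitivity of the defining morphism, and this is exactly where the hypotheses ``binary'' and ``uniformly recurrent'' enter — it is the step that must be strengthened for larger alphabets, where genuinely exceptional words appear. (A self-contained treatment, avoiding Moss\'e's theorem, would instead re-derive the needed synchronization directly for binary uniform morphisms, which is where most of the casework would live.)
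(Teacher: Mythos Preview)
Your argument is correct. The overall architecture---choose a scale $r^n$ comparable to $k$, take a block length near $r^n$ that is coprime to $r$, and use a synchronization property to force any two equal blocks to start at congruent positions modulo a large power of $r$---matches the paper's. The substantive difference is in how the synchronization is obtained. You invoke Moss\'e's recognizability theorem and iterate it through successive desubstitutions (with the geometric loss you flag) to conclude that every length-$r^n$ factor determines its starting position modulo $r^{\,n-m_0}$. The paper instead proves a self-contained combinatorial lemma (Lemma~8): by a substring-counting argument together with the complexity lower bound for aperiodic words, it shows that any occurrence of $\mu^n(s)$ in $w$ starts at a position whose residue $i$ modulo $r^n$ satisfies $\gcd(i,r^n)>r^n/m^2$; with $m=2$ this is already more than enough to rule out shifts of absolute value below $k$. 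Indeed the paper remarks explicitly that its proof ``does not rely on results about recognizability,'' in contrast to Postic's independent proof, which is essentially the route you take. Your approach is tidy once Moss\'e is granted, and the primitivity check is exactly where the binary hypothesis earns its keep; the paper's approach is elementary, avoids any external black box, and carries over verbatim to arbitrary alphabet size. Note finally that the paper does not prove the binary statement separately: it cites it from Berger--Defant and then proves the general Theorem~5, whose specialization to $m=2$ is what you should be comparing against.
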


In this paper, we first extend this result to alphabets of arbitrary size. We then prove Conjecture~\ref{conj:3} in the case of the Fibonacci word, a special case of a word generated by a non-uniform morphism. Specifically, we prove the following two theorems:

\begin{theorem}
\label{thm:5}
Suppose $w$ is an aperiodic, uniformly recurrent word generated by a uniform morphism (over any finite size alphabet). Then at every index of $w$ starts a $k$-antipower with block length at most $ck$ for some constant $c$ only depending on $w$.
\end{theorem}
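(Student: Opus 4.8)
The plan is to mirror Berger and Defant's binary argument while isolating exactly where binarity was used, and replace those spots with counting arguments that survive on a size-$m$ alphabet. Fix $w=\mu^\omega(a)$ with $\mu$ an $r$-uniform morphism, and suppose $w$ is aperiodic and uniformly recurrent. The rough idea is: to produce a $k$-antipower starting at index $i$ with block length $O(k)$, I want to choose a block length $\ell$ (a multiple of a suitable power $r^n$, or close to it) so that the $k$ consecutive length-$\ell$ blocks starting at position $i$ are forced to be pairwise distinct. The engine for distinctness is that, under a uniform morphism, a length-$\ell$ window with $\ell$ comparable to $r^n$ "sees" a bounded window of the level-$n$ preimage word, and two such windows are equal only if the corresponding bounded preimage windows agree up to a controlled shift; aperiodicity then caps how many consecutive windows can coincide.

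First I would set up the desugaring lemma: for $\ell=r^n$, a factor of $w$ of length $2r^n$ starting at an arbitrary position decomposes as a suffix of $\mu(b_0)$, then $\mu(b_1)\cdots\mu(b_{t})$, then a prefix of $\mu(b_{t+1})$, where $b_0b_1\cdots$ is the level-$n$ desubstitution of $w$ read starting from the appropriate position; in particular two length-$r^n$ windows of $w$ at positions congruent mod $r^n$ are equal iff the length-$2$ (or length-$3$) preimage windows plus the offset within a block agree. I would then prove a \emph{synchronization / bounded-repetition} statement: because $w$ is aperiodic and (by uniform recurrence, hence by a standard argument) the level-$n$ word $\mu^{\omega}(a)$ itself is aperiodic and uniformly recurrent with the same constants scaled, there is a constant $D$ (independent of $n$) such that among any $D$ consecutive length-$r^n$ blocks of $w$ that are aligned to the same offset, not all are equal — in fact one gets $\Omega(\text{number of blocks})$ distinct values. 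Combining the offsets: an arbitrary length-$\ell$ block is determined by (offset mod $r^n$, a bounded-length preimage word), and the offset is the same for all blocks in an arithmetic progression of positions with common difference $\ell$, so along such a progression distinctness reduces to distinctness of bounded preimage windows, which aperiodicity of the level-$n$ word controls.

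Concretely, the key steps in order: (1) reduce "$k$-antipower of block length $\le ck$ at index $i$" to finding $\ell$ with $\ell \asymp k$ and $k$ pairwise-distinct consecutive length-$\ell$ blocks from position $i$; (2) choose $n$ with $r^n \asymp k$ and take $\ell$ a multiple of $r^n$ in the range $[r^n, 2r^n)$ so that $k\ell \asymp k^2$ stays within a window where the desubstitution picture is clean, and note all $k$ blocks share one offset $o\in\{0,\dots,r^n-1\}$; (3) express the $j$-th block as $F(x_j)$ where $x_j$ is a bounded-length factor of the level-$n$ word starting at a position that increases by a fixed amount as $j$ increases, and $F$ is injective on these bounded factors because $\mu$ is injective on letters when read with the offset (this uses that $\mu$ is, after removing degenerate cases, "recognizable enough" — or one circumvents recognizability by a direct counting bound); (4) invoke aperiodicity of the level-$n$ word to say: among $k$ consecutive bounded-length factors spaced by a fixed gap, at most a bounded number can be equal, so one can pass to every $D$-th block (losing only a constant factor in $c$) and get genuine pairwise distinctness; (5) assemble the constant $c$ from $r$, $D$, and the uniform-recurrence function, all independent of $i$ and $k$.

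The main obstacle, which I expect to occupy most of the real work, is step (3)–(4): controlling \emph{recognizability} for a non-injective uniform morphism over a larger alphabet. In the binary case, Berger and Defant could case-analyze the few possible morphisms, but for general $m$ one needs either Mossé's recognizability theorem (whose hypotheses must be checked or worked around — aperiodicity plus primitivity-like consequences of uniform recurrence) or, preferably, a self-contained argument that bounds the number of positions of $w$ at which a given long factor can occur, which then forces many of the $k$ candidate blocks to differ for purely combinatorial reasons. A secondary subtlety is the \emph{characterization} clause promised in the abstract: the argument above will fail exactly when the level-$n$ desubstitution stabilizes to something periodic-looking along arithmetic progressions — i.e. when, despite $w$ being aperiodic, some congruence class of positions carries a periodic subsequence of blocks — and I would need to identify that this happens precisely for a describable family of morphisms (those where $\mu$ collapses to a constant on the relevant sub-alphabet), so the theorem as stated should be read with the understanding that "aperiodic" is doing exactly the work needed to exclude that family, with the exceptional words pinned down separately.
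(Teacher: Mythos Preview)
Your plan has a fatal structural problem in steps (2)--(4). By taking $\ell$ a multiple of $r^n$ you force all $k$ blocks to share one offset modulo $r^n$, and then (as you say) each block is $F(x_j)$ for a \emph{bounded-length} factor $x_j$ of the level-$n$ word, which is $w$ itself. But there are only finitely many bounded-length factors over a size-$m$ alphabet---at most $m^L$ if the bound is $L$---so for $k$ larger than that fixed constant the $x_j$ cannot be pairwise distinct, and hence neither can the blocks, regardless of any ``pass to every $D$-th'' thinning. (Concretely: with $\ell=r^n$ each block is determined by a length-$2$ window of $w$, and there are at most $m^2$ of those.) Aperiodicity of $w$ gives only the Morse--Hedlund global lower bound on the number of distinct factors; it cannot manufacture more than $m^L$ distinct length-$L$ windows, and no arithmetic thinning turns ``at most $D$ equal'' into ``pairwise distinct'' (e.g.\ the pattern $a,b,a,b,\dots$ survives every second term as $a,a,\dots$). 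So step (4) as written cannot succeed.

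The paper does essentially the opposite. It takes block length $\ell = r^n(y+2) - 1 \equiv -1 \pmod{r^n}$ (with $y$ the uniform-recurrence constant for a fixed factor $s$ containing every length-$2$ factor of $w$, and $n$ chosen so that $r^n/m^2 \ge k$), so the $k$ consecutive blocks have $k$ \emph{different} offsets $0,-1,\dots,-(k-1)$ modulo $r^n$. Each block is long enough to contain an $r^n$-aligned occurrence of $\mu^n(s)$. If blocks $i$ and $j$ were equal, then $\mu^n(s)$ would also occur in $w$ at an offset congruent to $j-i$ modulo $r^n$. The key Lemma~8 shows---by a short induction propagating shifts through all two-letter factors, followed by a factor-complexity count against the Morse--Hedlund bound of Lemma~7---that any occurrence of $\mu^n(s)$ in $w$ must start at a position whose residue modulo $r^n$ has $\gcd$ with $r^n$ exceeding $r^n/m^2$; since $0 < |j-i| < k \le r^n/m^2$, this is impossible. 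This is exactly the ``direct counting bound circumventing recognizability'' you gesture at, but it works only because the blocks are arranged to have \emph{distinct} offsets, which is the mechanism your same-offset plan misses.
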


\begin{theorem}
\label{thm:6}
There is a constant $c \le \frac{4}{\sqrt{5}}\phi \approx 2.89$ such that for any $k$, at any index of the Fibonacci word starts a $k$-antipower with block length at most $ck$.
\end{theorem}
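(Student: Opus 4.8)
The plan is to pass to the Sturmian (rotation) description of the Fibonacci word. Write $\alpha=\phi-1=1/\phi$, and recall that $\mathbf f$ is (up to a shift of the coding point) the cutting sequence of the orbit of $0$ under the rotation $x\mapsto x+\alpha$ on $\mathbb R/\mathbb Z$ relative to the two–interval partition. The one fact I will use is the standard one that for each block length $b\ge1$ there is a partition $\mathcal P_b$ of $\mathbb R/\mathbb Z$ into $b+1$ arcs whose endpoints are the orbit points $\{-t\alpha\}$, $0\le t\le b$, such that the length-$b$ factor of $\mathbf f$ beginning at position $p$ is determined by the arc of $\mathcal P_b$ containing $\{p\alpha\}$, and two positions yield the same length-$b$ factor exactly when the corresponding points lie in the same arc. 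Hence a $k$-antipower with block length $b$ occurs at index $i$ if and only if the $k$ points $\{i\alpha\},\{(i+b)\alpha\},\dots,\{(i+(k-1)b)\alpha\}$ — an arithmetic progression on the circle with common difference $\theta_b=\{b\alpha\}$ — lie in $k$ distinct arcs of $\mathcal P_b$. Since $\alpha$ is the golden ratio everything is governed by Fibonacci arithmetic: $\|F_m\alpha\|=\phi^{-m}=:\eta_m$ exactly, so by the three-distance theorem the arcs of $\mathcal P_b$ have at most three lengths, each a sum of $\eta_m$'s, and whenever $b$ lies in a Fibonacci window $[F_n,F_{n+1})$ every arc of $\mathcal P_b$ has length at most $\eta_{n-2}=\phi^{-(n-2)}$, with $F_n\eta_{n-2}\to\phi^{2}/\sqrt5$.

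Given $k$ and $i$ the task is then to exhibit a good block length $b\le\tfrac{4}{\sqrt5}\phi\,k$. I would take $b$ to be a short Zeckendorf sum $b=F_n+F_{n-j}$ (two or three terms, with $j$ a bounded offset) and $F_n\asymp k$, so that three things happen simultaneously. First, $b$ lands in the window $[F_n,F_{n+1})$, bounding the longest arc of $\mathcal P_b$ by $\eta_{n-2}$. Second, the common difference $\theta_b=\{b\alpha\}$ becomes a controlled golden-type quantity $\pm\eta_{n-j}\pm\eta_n$, which has bounded partial quotients, so that the three-distance structure of the progression $\{j\theta_b\}_{j<k}$ is explicit (its gaps are, up to bounded factors, of size $1/k$). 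Third, the interplay of the factors of $\phi$ among $F_n$, $F_{n+1}$, the scale $1/k$ of the progression, and the arc bound $\eta_{n-2}=\phi^{2}\phi^{-n}$ is exactly what is optimized by the constant, which factors as $\tfrac{4}{\sqrt5}\phi=\tfrac{\phi^{2}}{\sqrt5}\cdot\tfrac{4}{\phi}$: the first factor is the density of the coarsest arcs of $\mathcal P_b$ ($\approx1.17/b$), the second is the unavoidable discreteness loss in matching an integer $b$ near a Fibonacci number to the given $k$. (The identities $\phi^{2}=\phi+1$ and $\phi+\phi^{-1}=\sqrt5$ are what collapse the bookkeeping to this closed form.)

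The main obstacle — and the genuine content of the theorem — is verifying that this choice of $b$ really puts the $k$ progression points into distinct arcs. The naive sufficient condition "every arc of $\mathcal P_b$ is shorter than the shortest gap of the progression" is \emph{not} available: once the progression wraps around the circle its shortest gap is only of order $1/k$, while the longest arc of $\mathcal P_b$ is of order $1/b=\Theta(1/k)$, and for the constant to be as small as $\tfrac{4}{\sqrt5}\phi$ these two quantities are genuinely comparable — indeed one can already see at small scales a case where some progression gap is shorter than the longest arc yet no two of the $k$ factors coincide. So one must track \emph{which} arc each progression point lands in, not merely its nearest neighbor. The natural tool is the self-similarity of the $\alpha$-rotation: renormalizing by the first-return map to a suitable short interval rescales $\mathcal P_b$ and the progression at once and replaces $(b,k)$ by a strictly smaller pair, so the interleaving is proved by induction with a finite base case for small $k$. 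The work lies in carrying out this renormalization cleanly — keeping track of the parity-dependent sign $\{F_m\alpha\}\in\{\eta_m,\,1-\eta_m\}$, of how the progression step and the arc structure transform under desubstitution, and of the fact that the induction must not leak any further factor of $\phi$ beyond those already built into $\tfrac{4}{\sqrt5}\phi$. A secondary, routine matter is to dispose of the finitely many small $k$, and of any $k$ for which no two-term Zeckendorf sum lands in the target window, either by direct inspection or by allowing a third Zeckendorf term.
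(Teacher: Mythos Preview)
Your proposal is a plan rather than a proof: the renormalization-and-induction scheme that you identify as ``the genuine content of the theorem'' is never carried out, and you explicitly defer both it and the small-$k$ cases. More importantly, the plan rests on a mistaken premise. You assert that the naive sufficient condition (every arc of $\mathcal P_b$ shorter than the smallest pairwise distance among the $k$ progression points) is unavailable because the progression must wrap around the circle and acquire gaps of order $1/k$. For a generic two-term Zeckendorf choice $b=F_n+F_{n-j}$ with $j$ bounded that is indeed a danger, since the step $\|b\alpha\|\asymp\phi^{-(n-j)}$ is then too large relative to $1/k$. But with the specific choice $b=2F_n$ the step is $\|2F_n\alpha\|=2\phi^{-n}$, and for $k\le\lfloor\phi^n/2\rfloor$ one has $(k-1)\cdot 2\phi^{-n}<1$: the progression does \emph{not} wrap, every pairwise distance $\|m\cdot 2F_n\alpha\|$ for $1\le m<k$ is at least $2\phi^{-n}$, and $2\phi^{-n}>\phi\cdot\phi^{-n}=\phi^{-(n-1)}$, which is an upper bound for the longest arc of $\mathcal P_{2F_n}$ (equivalently, the largest gap among the $2F_n+1$ sample points). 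So the naive criterion \emph{does} hold and gives pairwise distinct blocks immediately.

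This is exactly the paper's argument. It fixes $b=2F_n$, proves directly (via the ``wrapping'' formulation, which is dual to your arc picture) that the longest gap among $\{(x+1)\phi\},\dots,\{(x+2F_n+1)\phi\}$ is at most $\phi^{-n+1}$, and checks that $\min(\{m\cdot 2F_n\phi\},1-\{m\cdot 2F_n\phi\})\ge 2\phi^{-n}>\phi^{-n+1}$ for every $1\le m<\lfloor\tfrac{\sqrt5}{2}F_n\rfloor$. One then takes the least $n$ with $\lfloor\tfrac{\sqrt5}{2}F_n\rfloor\ge k$ and computes $2F_n/k<\tfrac{4}{\sqrt5}\phi$; no induction, no renormalization, no case analysis for small $k$. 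Your framework is the right one, but the choice $b=2F_n$ makes the ``main obstacle'' disappear and collapses the whole argument to a few lines of arithmetic.
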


Theorem~\ref{thm:6} is the first instance of a proof of Conjecture~\ref{conj:3} in the case of a word generated by a non-uniform morphism.

Theorem~\ref{thm:5} was later proved independently by \cite{postic}; our proof is an alternative one, as it does not rely on results about recognizability.

\section{Antipowers in Uniform Morphic Words}
A \textit{conjugate} of a word $w$ is a cyclic rotation of $w$, that is, any word $vu$ if $w=uv$ for words $u, v$. A word is \textit{primitive} if it equals none of its conjugates, i.e. it is not periodic. For any word $v$, let $v_{[i, j]}$ denote the substring of $v$ starting at index $i$ and ending one before $j$, where $v$ is $0$-indexed. Also, let $v_i$ be the $i$-th letter in $v$. For example, if $v=01101001$, then $v_{[2, 6]}=1010$ is the string consisting of the middle four letters of $v$, and $v_0=0$.

We use the following fact about the complexity of infinite words.
\begin{lemma}[\cite{algcombo}]
\label{lem:7}
Let $w$ be an infinite aperiodic word. Then, for all positive integers $k$, the number of distinct substrings of length $k$ in $w$ is at least $k+1$.
\end{lemma}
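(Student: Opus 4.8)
The statement is the classical Morse--Hedlund lower bound on subword complexity, so the plan is to reproduce that argument in the notation of the paper. Write $p(k)$ for the number of distinct length-$k$ substrings of $w$. The overall strategy is to establish two facts — that $p(1) \ge 2$, and that $p$ is \emph{strictly} increasing — and then to conclude by a one-line induction: since $p(1) \ge 2 = 1+1$ and $p(k+1) \ge p(k)+1$, we get $p(k) \ge k+1$ for every positive integer $k$.

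The base case $p(1) \ge 2$ is immediate: a word using only a single letter has every suffix periodic, so an aperiodic $w$ must contain at least two distinct letters. For monotonicity, I would consider the map sending each length-$(k+1)$ substring of $w$ to its length-$k$ prefix. Because $w$ is infinite to the right, every length-$k$ substring occurring in $w$ can be extended to the right by at least one letter, so this map is onto the set of length-$k$ substrings; hence $p(k+1) \ge p(k)$.

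The hard part, and the heart of the argument, is upgrading this to a strict inequality by showing that $p(k+1) = p(k)$ would force $w$ to have a periodic suffix, contradicting aperiodicity. If $p(k+1) = p(k)$, then the prefix map above is a surjection between finite sets of equal size, hence a bijection; this means every length-$k$ substring $u$ of $w$ has a \emph{unique} right extension, i.e.\ a unique letter $a$ with $ua$ occurring in $w$. Consequently the letter following any occurrence of $u$ in $w$ is determined by $u$ alone. Listing the length-$k$ windows $u_n = w_{[n,\,n+k]}$ for $n = 0, 1, 2, \dots$, only finitely many distinct windows exist, so two coincide: $u_{n_1} = u_{n_2}$ with $q := n_2 - n_1 > 0$. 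I would then prove by (strong) induction on $n \ge n_1$ that $w_n = w_{n+q}$: the positions $n_1 \le n < n_1 + k$ are handled directly by the equality $u_{n_1} = u_{n_2}$, and for the inductive step the length-$k$ window $w_{[n-k+1,\,n+1]}$ agrees with $w_{[n-k+1+q,\,n+1+q]}$ by the inductive hypothesis, so their forced right extensions $w_{n+1}$ and $w_{n+1+q}$ coincide by the unique-extension property. This exhibits a period $q$ from index $n_1$ onward — a periodic suffix — contradicting aperiodicity, so $p(k+1) \ge p(k)+1$ and the induction closes.

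The only subtlety to get right is the bookkeeping in that inductive step: one must check that the positions $n-k+1, \dots, n$ used to form the relevant window all lie at or beyond $n_1$ (which holds once $n \ge n_1 + k - 1$), so that the inductive hypothesis legitimately applies to each of them before invoking the unique right extension.
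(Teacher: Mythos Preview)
Your argument is correct: it is precisely the classical Morse--Hedlund theorem, and your handling of the inductive bookkeeping (ensuring $n-k+1 \ge n_1$ before invoking the unique-extension property) is sound. Note, however, that the paper does not supply its own proof of this lemma; it is stated with a citation to \cite{algcombo} and used as a black box, so there is no in-paper argument to compare against. Your write-up simply fills in the standard proof that the paper elects to omit.
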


If an aperiodic word $w$ has exactly $k+1$ substrings of each length $k$, then $w$ is called \textit{Sturmian}. 

We are now in a position to prove the conjecture posed by Berger and Defant in the case of aperiodic, uniformly recurrent words generated by a uniform morphism. We use a method similar to their proof of Theorem~\ref{thm:4}, which solves Conjecture~\ref{conj:3} for such words over a binary alphabet.

\begin{lemma}
\label{lem:8}
Let $w$ be an aperiodic, uniformly recurrent infinite word generated by an $r$-uniform morphism $\mu$. Let $t$ be a substring of $w$ such that every two letter substring of $w$ is a substring of $t$. Let $s$ be a substring of $w$ such that $s=ftg$ for some letters $f, g \in A$, so that $s$ contains $t$ and is one letter longer on each side. Fix $n$, and suppose that $\mu^n(s)$ is a substring of $w$, so $\mu^n(s)=w_{[\gamma, \gamma+s\cdot r^n]}$ for some $\gamma$. If the remainder when $\gamma$ is divided by $r^n$ is $i$, then $\gcd(i, r^n)  > \frac{r^n}{m^2}$.
\end{lemma}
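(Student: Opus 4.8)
The plan is to exploit the self-similarity $w = \mu^n(w) = \mu^n(w_0)\mu^n(w_1)\cdots$, whose level-$n$ block boundaries are the multiples of $r^n$; the hypothesis places the occurrence of $\mu^n(s)$ at offset $i$ from these boundaries. If $i = 0$ then $\gcd(i,r^n) = r^n > r^n/m^2$ (aperiodicity forces $m \ge 2$), so assume $i \neq 0$ and set $d = \gcd(i,r^n)$, $e = r^n/d \ge 2$; the goal becomes $e < m^2$. Throughout, let $\mathcal{B} = \{\mu^n(c) : c \in A\}$, so $|\mathcal{B}| \le m$, and for letters $a,b$ and $0 \le x \le r^n$ let $\theta_x(a,b)$ denote the length-$r^n$ word formed by concatenating the last $x$ letters of $\mu^n(a)$ with the first $r^n - x$ letters of $\mu^n(b)$.

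The first substantive step is a ``rotation identity.'' Writing $\gamma = qr^n + i$, the occurrence $\mu^n(s) = w_{[\gamma,\gamma+|s|r^n]}$ contains the $|s|-1$ complete level-$n$ blocks $\mu^n(w_{q+1}),\dots,\mu^n(w_{q+|s|-1})$ of $w$; matching each against the decomposition $\mu^n(s_0)\mu^n(s_1)\cdots$ of $\mu^n(s)$ gives $\mu^n(w_{q+j}) = \theta_i(s_{j-1},s_j)$ for $1 \le j \le |s|-1$. Since $s = ftg$ and $t$ contains every two-letter substring of $w$, the pairs $(s_{j-1},s_j)$ run through all two-letter factors of $w$, so $\theta_i(a,b) \in \mathcal{B}$ for every two-letter factor $ab$ of $w$. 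Applying this to the consecutive pairs $w_k w_{k+1}$ and telescoping the blocks $\theta_i(w_k,w_{k+1}) \in \mathcal{B}$ shows that $w_{[r^n-i,\infty)}$ is the $\mu^n$-image of an infinite word $c^{(1)}$ over $A$; in other words, $w$ also decomposes into blocks from $\mathcal{B}$ with boundaries at the positions $\equiv -i \pmod{r^n}$.

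The heart of the argument, and the step I expect to be the main obstacle, is to iterate this. Reading the same occurrence of $\mu^n(s)$ against this second decomposition, the complete blocks it overlaps are $\mu^n$-images of single letters of $c^{(1)}$, hence lie in $\mathcal{B}$ automatically, and the argument above repeats to give $\theta_{2i \bmod r^n}(a,b) \in \mathcal{B}$ for every two-letter factor $ab$; telescoping then produces a decomposition with boundaries $\equiv -2i$. Repeating, the shift advances by $i$ at each stage, hence runs over the whole cyclic subgroup $\langle i\rangle = \{0,d,2d,\dots,(e-1)d\}$ of $\mathbb{Z}/r^n\mathbb{Z}$: for every $\rho \in \langle i\rangle$ and every two-letter factor $ab$ we obtain $\theta_\rho(a,b) \in \mathcal{B}$, and telescoping each yields a level-$n$ decomposition of a cofinite part of $w$ into blocks from $\mathcal{B}$ with boundaries $\equiv -\rho$. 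Collectively these $e$ decompositions have boundary set exactly the multiples of $d$. The delicate points are verifying that the shift genuinely increments by $i$ at each stage (so that it exhausts $\langle i\rangle$) and handling occurrences of $\mu^n(s)$ lying so near the start of $w$ that they only partially overlap a derived decomposition — which causes no problem because $s = ftg$ keeps $t$, and hence all two-letter factors, in the interior of the occurrence. It is exactly this maneuver that lets one avoid recognizability: no decomposition of $w$ is ever read off a preimage word; one uses only that single-letter $\mu^n$-images lie in $\mathcal{B}$.

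Finally I would count the length-$r^n$ factors of $w$. Given such a factor, uniform recurrence provides an occurrence at some position $p \ge 2r^n$; set $\rho = p \bmod d$. If $\rho = 0$, then $p$ is a boundary of one of the $e$ decompositions, so the factor equals the block starting there, an element of $\mathcal{B}$: at most $m$ possibilities. If $0 < \rho < d$, then $p - \rho$ is a boundary of one of them, so $w_{[p-\rho,\,p-\rho+r^n+d]}$ is a block followed by the length-$d$ prefix of the next block — a word determined by an ordered pair of letters of $A$ — and our factor is its length-$r^n$ subword at offset $\rho$, giving at most $m^2$ possibilities for each such $\rho$. Hence $w$ has at most $m + (d-1)m^2 \le dm^2$ distinct factors of length $r^n$, whereas the complexity lemma above guarantees at least $r^n + 1 = ed + 1$. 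Therefore $ed + 1 \le dm^2$, so $e < m^2$, i.e.\ $\gcd(i,r^n) = r^n/e > r^n/m^2$, as claimed.
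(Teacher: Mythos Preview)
Your proposal is correct and follows essentially the same approach as the paper's proof. The paper proves by induction on $p$ that $(\mu^n(ab))_{[r^n-x,2r^n-x]}\in S$ whenever $x\equiv pi\pmod{r^n}$, using the extension of $ab$ to $cab$ or $abc$ inside $s=ftg$ for the inductive step, and then applies the same factor count $m+(d-1)m^2$ against Lemma~7; your ``shifted decomposition'' picture is a repackaging of that same induction (each new decomposition is exactly what the inductive hypothesis at stage $p$ asserts), and your counting and final inequality match the paper's line for line.
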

\begin{proof}
Let $S = \{ \mu^n(a) | a \in A\}$.

First, we will prove that if $a, b \in A$ are such that $ab$ is a substring of $w$, then $(\mu^n(ab))_{[r^n-i, 2r^n-i]}$ is in $S$. (*)

Since $ab$ is a substring of $t$, which is in $s$, we have that $ab=s_{[c, c+2]}$ for some integer $c$, meaning that $\mu^n(ab)$ is a substring of $\mu^n(s)$ with 
$$\mu^n(ab)=\mu^n(s)_{[cr^n, (c+2)r^n]}.$$ Then, since $\mu^n(s)=w_{[\gamma, \gamma+s\cdot r^n]}$, we have that
\begin{align*}(\mu^n(ab))_{[r^n-i, 2r^n-i]}=&\mu^n(s)_{[(c+1)r^n-i, (c+2)r^n-i]}\\=&w_{[(\gamma-i)+(c+1)r^n, (\gamma-i)+(c+2)r^n]}\\=&\mu^n(w_{(\gamma-i)/r^n + (c+1)}) \in S.\end{align*}

Now, we claim that for all positive integers $p$, if the remainder when $p\cdot i$ is divided by $r^n$ is $x$, then for any substring $ab$ of $w$ for $a, b \in A$, we have $(\mu^n(ab))_{[r^n-x, 2r^n-x]} \in S$. We prove this claim by induction on $p$. We have already proved the base case $p=1$. Suppose that the statement is true for some $p$ with $pi$ having remainder $x$ when divided by $r^n$.

Fix $a, b \in A$ such that $ab$ is a substring of $w$.\\

\noindent\textbf{Case 1}: $x+i \le r^n$. Take $c \in A$ such that $cab$ is a substring of $s$. Then there are letters $d, e \in A$ such that $(\mu^n(cab))_{[r^n-i, 3r^n-i]}=\mu^n(de)$ by the claim (*). Therefore, \begin{align*}(\mu^n(ab))_{[r^n-x-i, 2r^n-x-i]}=&(\mu^n(cab))_{[2r^n-x-i, 3r^n-x-i]}\\=&(\mu^n(de))_{[r^n-x, 2r^n-x]} \in S.\end{align*}

\noindent\textbf{Case 2}: $x+i > r^n$. Take $c \in A$ such that $abc$ is a substring of $s$. Then there are letters $d, e \in A$ such that $(\mu^n(abc))_{[r^n-i, 3r^n-i]}=\mu^n(de)$ by the claim (*). Therefore, \begin{align*}(\mu^n(ab))_{[r^n-x-i+r^n, 2r^n-x-i+r^n]}=&\mu^n((abc))_{[2r^n-x-i, 3r^n-x-i]}\\=&(\mu^n(de))_{[r^n-x, 2r^n-x]} \in S.\end{align*}

Now suppose that for sake of contradiction, $\gcd(i, r^n)=j \le \frac{r^n}{m^2}$. Then, we have that $w_{[cr^n-p\cdot j, (c+1)r^n-p\cdot j]} \in S$ for any $p, c$ such that $cr^n-pj \ge 0$. Hence, the total number of distinct substrings of length $r^n$ in $w$ is at most $m + (j-1)m^2 \le r^n$, since we can write any such substring as either $\mu^n(a)$ for $a \in A$ or $(\mu^n(ab))_{[r^n-x, 2r^n-x]}$ for $a, b \in A$ and $0 < x < j$. However, if $w$ is aperiodic, then it must have at least $r^n+1$ distinct substrings of length $r^n$ by Lemma~\ref{lem:7}.
\end{proof}

\begin{proof}[of Theorem~\ref{thm:5}]

Take $n$ such that $\frac{r^n}{m^2} \ge k$. Let $s$ be as in the statement of Lemma~\ref{lem:8}. Because $w$ is uniformly recurrent, there is a constant $y$ such that $s$ is a substring of any length-$y$ substring of $w$. Then, consider the word starting at a given index $a$ with $k$ blocks of size $r^n \cdot y + 2r^n-1$. Since each block covers at least $y$ blocks of size $r^n$ that are in $\mu^n(A)$, each block contains a copy of $\mu^n(s)$ that starts at an index divisible by $r^n$. Suppose that the $i$-th block and the $j$-th block are equal.

Then, since the $j$-th block starts at an index shifted to the left by $(j-i)$ modulo $r^n$ compared to the $i$-th block, the $j$-th block must have a copy of $\mu^n(s)$ starting at an index congruent to $(j-i)$ modulo $r^n$. That is, we have $$w_{[cr^n-(j-i), (c+1)r^n-(j-i)]} = \mu^n(s)$$ for some integer $c$. But $|j-i| < k \le \frac{r^n}{m^2}$, so $\gcd(j-i, r^n) \le |j-i| < \frac{r^n}{m^2}$, which is impossible by Lemma~\ref{lem:8}. Therefore, we have constructed a $k$-antipower starting at every index, and we are done.
\end{proof}

Now, we aim to classify the infinite aperiodic uniformly recurrent words that arise from a uniform morphism. Classifying the uniformly recurrent words is easier than classifying aperiodic words, as we see below.

\begin{lemma}[Proposition 5.2, \cite{queffelec}]
\label{lem:9}
Let $w=\mu^{\omega}(0)$ over an alphabet $A$ of size $m$ for an $r$-uniform morphism $\mu$ be such that $w$ contains all letters in $A$. Then $w$ is uniformly recurrent if and only if $\mu^{m-1}(a)$ contains $0$ for all letters $a$ in $w$.
\end{lemma}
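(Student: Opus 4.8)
The plan is to translate the condition ``$\mu^{m-1}(a)$ contains the letter $0$ for every $a$'' into a reachability statement in a finite directed graph, and then to exploit the fixed-point identity $w=\mu^{n}(w)$ to pass between the single letter $0$ and the arbitrarily long prefixes $\mu^{n}(0)$ of $w$. Introduce the \emph{dependency graph} $G$ on vertex set $A$, with an edge $a\to b$ precisely when the letter $b$ occurs in $\mu(a)$; prolongability of $\mu$ on $0$ gives a loop $0\to 0$, and since $w=\mu^{\omega}(0)$ contains every letter of $A$, every vertex of $G$ is reachable from $0$. Let $R\subseteq A$ be the set of letters from which $0$ is reachable in $G$. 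The key preliminary observation is that $R=A$ if and only if $\mu^{m-1}(a)$ contains $0$ for every $a\in A$: indeed, $0$ occurs in $\mu^{k}(a)$ for some $k\ge 0$ exactly when $G$ has a walk from $a$ to $0$, and such a walk, if it exists, can be taken to be a simple path and hence of length at most $m-1$; and the loop $0\to 0$ means that once $0$ appears in $\mu^{k}(a)$ it appears in $\mu^{k'}(a)$ for all $k'\ge k$, in particular in $\mu^{m-1}(a)$. This reduces the lemma to the equivalence ``$w$ is uniformly recurrent $\iff R=A$.''

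For $R=A\Rightarrow w$ uniformly recurrent, note first that $R=A$ forces every block $\mu^{m-1}(w_i)$ in the factorization $w=\mu^{m-1}(w_0)\mu^{m-1}(w_1)\cdots$ into blocks of length $r^{m-1}$ to contain a $0$; since any window of length $2r^{m-1}$ fully contains one such block, the letter $0$ occurs in $w$ with consecutive gaps at most $g:=2r^{m-1}$. Now fix $\ell\ge 1$. There are only finitely many factors of $w$ of length $\ell$, and each occurs at some finite position, so for $N$ large enough every one of them occurs inside the prefix $\mu^{N}(0)$ of $w$. Using $w=\mu^{N}(w)=\mu^{N}(w_0)\mu^{N}(w_1)\cdots$, the block $\mu^{N}(0)$ occurs in $w$ starting at position $jr^{N}$ whenever $w_j=0$, and consecutive such indices $j$ differ by at most $g$, so these aligned occurrences of $\mu^{N}(0)$ start with gaps at most $gr^{N}$. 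Hence every window of $w$ of length $(g+2)r^{N}$ fully contains a copy of $\mu^{N}(0)$ and therefore contains every length-$\ell$ factor of $w$. As $\ell$ was arbitrary, $w$ is uniformly recurrent.

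For the converse I argue the contrapositive: if $R\ne A$, choose $a\in A$ from which $0$ is not reachable, so $0$ does not occur in $\mu^{n}(a)$ for any $n\ge 1$; since $a$ occurs in $w$, say $w_p=a$, the $p$-th block of the factorization $w=\mu^{n}(w)$ equals $\mu^{n}(a)$, a factor of $w$ of length $r^{n}$ with no $0$, and letting $n\to\infty$ gives arbitrarily long $0$-free factors, so the length-$1$ factor $0=w_0$ fails to recur in every sufficiently long window and $w$ is not uniformly recurrent. The graph bookkeeping and the factor count are routine; the step requiring the most care is the bootstrap in the previous paragraph, where one must remember that uniform recurrence demands that \emph{every} factor, not merely the letter $0$, recur with bounded gaps, and use the alignment supplied by $w=\mu^{N}(w)$ to lift the bounded-gap property from $0$ up to $\mu^{N}(0)$ and thence to all short factors.
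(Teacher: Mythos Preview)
The paper does not actually prove this lemma; it is merely quoted from Queff\'elec (Proposition~5.2 there), so there is no ``paper's own proof'' to compare against. Your argument is self-contained and correct: the translation of the hypothesis into the reachability condition $R=A$ via the dependency graph is accurate (the loop at $0$ from prolongability is exactly what lets you pad a short path out to length $m-1$), and both implications are handled cleanly. The bootstrap step---bounded gaps for the letter $0$ propagate through $w=\mu^{N}(w)$ to bounded gaps for the prefix $\mu^{N}(0)$, which in turn is long enough to contain every length-$\ell$ factor---is the right mechanism, and your contrapositive for the other direction is immediate once one remembers the standing hypothesis that every letter of $A$ actually occurs in $w$.

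One cosmetic remark: the observation that every vertex is reachable \emph{from} $0$ is true but never used; only reachability \emph{to} $0$ (the set $R$) matters for the proof. You could drop that sentence without loss.
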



Having classified uniformly recurrent words, we turn to aperiodic words. We will determine criteria for a word to not be aperiodic.
\begin{definition}
A word $w$ is \textit{eventually periodic} if there is some integer $n$ such that deleting the first $n$ letters of $w$ makes it periodic.
\end{definition}

\begin{lemma}
\label{lem:10}
Suppose that $w$ is an eventually periodic word generated by an injective $r$-uniform morphism $\mu$. Then, the period of $w$ is not divisible by $r$.
\end{lemma}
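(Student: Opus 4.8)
The plan is to argue by contradiction, exploiting the self-similar block structure of the fixed point $w=\mu^{\omega}(0)$ together with injectivity of $\mu$. First I would record a few preliminary facts. Since $\mu$ is prolongable on $0$ we have $\mu(w)=w$, so for every $j\ge 0$ the length-$r$ block $w_{[jr,(j+1)r]}$ equals $\mu(w_j)$; this is the block decomposition of $w$ induced by $\mu$. Next, an $r$-uniform morphism that is injective (on $A^{*}$, equivalently on $A^{\infty}$) is injective on the alphabet: if $\mu(a)=\mu(b)$ for letters $a,b$, then the two length-$r$ words $\mu(a)$ and $\mu(b)$ coincide, whence $a=b$ by injectivity. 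Finally, since any nonempty set of positive integers has a least element, the eventual period of $w$ --- the least $q$ for which some suffix of $w$ has period $q$ --- is well-defined, so it is meaningful to speak of ``the period'' $q$ of $w$.

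Now suppose, for contradiction, that $r\mid q$, and write $q=rq'$ with $q'\ge 1$. Choose $N$ with $w_{i+q}=w_i$ for all $i\ge N$, and set $N'=\lceil N/r\rceil$. The crucial observation is that, precisely because $q$ is an exact multiple of $r$, shifting by $q$ carries each block of the decomposition onto another block: for every $j\ge N'$ we have $jr\ge N$, hence
\[
\mu(w_j)=w_{[jr,(j+1)r]}=w_{[jr+q,(j+1)r+q]}=w_{[(j+q')r,(j+q'+1)r]}=\mu(w_{j+q'}).
\]
By injectivity of $\mu$ on the alphabet this gives $w_j=w_{j+q'}$ for all $j\ge N'$.

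But this says that the suffix of $w$ starting at position $N'$ has period $q'$, so $w$ is eventually periodic with eventual period at most $q'=q/r<q$, contradicting the minimality of $q$. Hence $r\nmid q$. I do not anticipate a real obstacle: the only points needing care are choosing $N'$ so that the shifted indices remain aligned with the block decomposition (which is what forces the rounding-up of $N/r$), and the observation that divisibility of the period by $r$ is exactly the condition under which periodicity ``descends'' through $\mu$ --- and this last point is the heart of the argument.
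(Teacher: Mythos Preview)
Your argument is correct and is essentially the same as the paper's: both use the block decomposition $w_{[jr,(j+1)r]}=\mu(w_j)$ coming from $\mu(w)=w$, shift by the putative period $q=rq'$, and then apply injectivity of $\mu$ to conclude that $w$ already has the smaller eventual period $q'$. Your version is a bit more explicit about the index bookkeeping (the choice $N'=\lceil N/r\rceil$), whereas the paper phrases the same step informally as ``start far enough along in the word,'' but the underlying idea is identical.
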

\begin{proof}
Suppose that the period of $w$ is divisible by $r$ and equals $kr$ for some $k$. Then, if we start far enough along in the word and take $w_{[nr, nr+kr]}$ for a large enough integer $n$, we get that $w_{[nr, nr+kr]}$ is a repeating unit of $w$ and is equal to $\mu(a_1)\mu(a_2)\cdots\mu(a_k)$ for letters $a_1, a_2, \dots, a_k$. But then $a_1a_2\cdots a_k$ is a repeating unit of $w$ since $w=\mu^{-1}(w)$, contradicting the minimality of the period $kr$.
\end{proof}

\begin{lemma}
\label{lem:11}
Suppose that $w$ is an eventually periodic, recurrent word. Then $w$ is periodic.
\end{lemma}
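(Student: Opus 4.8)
The plan is to argue by contradiction, using recurrence to force a ``defective'' prefix of $w$ to reappear, and then showing that any such reappearance is necessarily aligned with the period, which makes the defect vanish. So suppose $w$ is eventually periodic but not periodic. Let $p\ge 1$ be the smallest eventual period of $w$, and let $N$ be the least index with $w_i=w_{i+p}$ for all $i\ge N$; since $w$ is not periodic, $N\ge 1$. Write $\rho$ for the restriction of $w$ to the indices $\{N,N+1,\dots\}$, which is $p$-periodic, and note that $\rho$ admits no period smaller than $p$ (such a period would be an eventual period of $w$, contradicting minimality of $p$).

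The first step is a rigidity claim: \emph{if the length-$(N+p)$ prefix $w_{[0,N+p]}$ occurs in $w$ at some position $j\ge N$, then $p\mid j$.} Such an occurrence means $w_{j+t}=w_t$ for $0\le t<N+p$. Restricting to $t\in\{N,\dots,N+p-1\}$ places both $t$ and $j+t$ in the periodic range $[N,\infty)$, so, writing $q$ for the residue of $j$ modulo $p$, we get $\rho(q+t)=\rho(t)$ for $p$ consecutive values of $t$, hence for all $t\ge N$ by $p$-periodicity of $\rho$. Then $\gcd(p,q)$ is a period of $\rho$, so minimality of $p$ forces $q=0$, i.e.\ $p\mid j$. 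The second step feeds in recurrence: the prefix $w_{[0,N+p]}$ is a finite substring of $w$, so, as $w$ is recurrent, it occurs at infinitely many positions and in particular at some $j\ge N$; by the rigidity claim, $p\mid j$. Now I would propagate $w_{j+t}=w_t$ to every $t\ge 0$: for $0\le t<N$ the equality $w_t=w_{j+t}$ together with $j+t\ge N$ and $p\mid j$ lets one reduce $j+t$ modulo $p$ into the window $[N,N+p)$ to conclude $w_t=w_{N+((t-N)\bmod p)}$; for $t\ge N$ this identity is immediate. Hence $w_t=w_{N+((t-N)\bmod p)}$ for all $t\ge 0$, which yields $w_{t+p}=w_t$ for all $t\ge 0$: $w$ is periodic, contradicting our assumption. (If one prefers to avoid the contradiction, the same computation shows directly that $w$ is periodic, after disposing of the trivial case $N=0$.)

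I expect the rigidity claim to be the main obstacle: it is exactly the place where a misaligned return of the defective prefix must be excluded, and the only available leverage is the minimality of the period together with the elementary fact that the periods of $\rho$ are closed under $\gcd$ (equivalently, that a cyclic word of length $p$ invariant under rotation by $q$ is invariant under rotation by $\gcd(p,q)$). Everything else — extracting a return at a position $\ge N$, which is immediate from recurrence, and the modular bookkeeping in the propagation step — is routine.
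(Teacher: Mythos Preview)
Your argument is correct, but it is substantially more elaborate than the paper's. The paper works with a single substring of length $\ell+1$, namely $s=w_{[i-1,i+\ell]}$, where $i$ is the minimal index from which $w$ is $\ell$-periodic. By recurrence $s$ reoccurs entirely inside the periodic suffix, and there any two letters $\ell$ apart coincide; hence $s_0=s_\ell$, i.e.\ $w_{i-1}=w_{i-1+\ell}$, pushing periodicity one step to the left and contradicting minimality of $i$. That is the whole proof: no rigidity claim, no $\gcd$-of-periods argument, no propagation step.

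Your route instead takes the full prefix $w_{[0,N+p]}$, proves that any return at position $j\ge N$ must satisfy $p\mid j$ (via the minimality of $p$ and the closure of periods under $\gcd$), and then transports the prefix back along this aligned return to see that $w$ is periodic from the start. This is sound and conceptually clean, and the rigidity claim is a nice standalone fact; but the paper's one-letter trick sidesteps both the alignment argument and the modular bookkeeping by localizing the contradiction at the single ``defective'' index $i-1$.
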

\begin{proof}
For sake of contradiction, suppose that $w$ is only eventually periodic, with period $\ell$ and starting index $i>0$. Let $s=w_{[i-1, i+l]}$, with length $\ell+1$. Because $w$ is recurrent, $s$ must appear infinitely many times in $w$ as a substring, so it appears in the periodic part of $w$ as a substring. But since that part has period $\ell$, the first and last letters in $s$ must be the same, contradicting the fact that $i$ is the starting point of the periodic part of $w$. So, $w$ is periodic.
\end{proof}

\begin{lemma}
\label{lem:12}
Suppose that $w$ is a periodic infinite word with minimal repeating unit $t$, i.e., $t$ is the smallest word such that $w=t^{\omega}$. Then $t$ is primitive.
\end{lemma}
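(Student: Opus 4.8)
The plan is to argue by contradiction: if $t$ is not primitive, then it coincides with one of its nontrivial conjugates, and from that coincidence I will extract a repeating unit of $w$ strictly shorter than $t$, contradicting the minimality of $t$.

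First I would unpack the hypothesis. Suppose $t$ is not primitive, so $t$ equals some conjugate of itself. Discarding the two trivial rotations (which are always equal to $t$ and carry no information), write $t = uv = vu$ for nonempty words $u,v$; in particular $1 \le |u| < |t|$. The key computation is that the commutation relation $uv = vu$ forces $w = uw$: regrouping the infinite concatenation, $w = t^{\omega} = (uv)^{\omega} = u\,(vu)^{\omega} = u\,(uv)^{\omega} = u\,t^{\omega} = uw$, where the third equality uses $vu = uv$.

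Next I would read off the consequence of $w = uw$. This identity says $w_{j} = w_{j+|u|}$ for every index $j \ge 0$, so $w$ is periodic with period $|u|$, and in fact $w = u^{\omega}$ since the length-$|u|$ prefix of $w$ is exactly $u$. But $|u| < |t|$, so $u$ is a repeating unit of $w$ strictly shorter than $t$, contradicting the assumed minimality of $t$. Hence $t$ must be primitive.

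The only point requiring a little care is the passage from ``$t$ equals one of its conjugates'' to the usable relation $uv = vu$ with both factors nonempty, i.e., excluding the trivial conjugations $u = \varepsilon$ or $v = \varepsilon$; alternatively, one may phrase non-primitivity directly as $t = z^{j}$ for some $j \ge 2$ and $|z| < |t|$, in which case $w = t^{\omega} = z^{\omega}$ immediately and no commutation step is needed. Beyond this bit of bookkeeping, the argument is routine and I expect no real obstacle.
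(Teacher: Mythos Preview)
Your proof is correct. Both you and the paper argue by contradiction from the assumption that $t$ equals a nontrivial conjugate of itself, but the executions differ: the paper chases indices, observing that $t_{0}=t_{i}=t_{2i}=\cdots$ and then invoking $\gcd(i,\ell)$ to conclude that $t$ (hence $w$) has period $\ell'=\gcd(i,\ell)<\ell$; you instead work structurally, using the commutation $uv=vu$ and the regrouping $(uv)^{\omega}=u(vu)^{\omega}$ to obtain $w=uw$ directly. Your route avoids the gcd step entirely and is arguably cleaner for this lemma, while the paper's index argument makes the exact smaller period explicit. Your closing remark that one may alternatively read non-primitivity as $t=z^{j}$ with $j\ge 2$ is fine, but note the paper defines ``primitive'' as ``equals none of its conjugates,'' so your $uv=vu$ argument is the one that matches the stated definition without appealing to the (standard but unstated) equivalence.
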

\begin{proof}
Suppose that $t$ equals one of its conjugates. Let $t$ have length $\ell$, and equal itself shifted by $i$. Then, $t_0=t_i=t_{2i}=\cdots$, so $t_0=t_x$ for any $x$ that is the remainder of an integer multiple of $i$ modulo $l$. If $\gcd(i, \ell)=\ell'$, we have that $t_{[0, \ell']}=t_{[\ell', 2\ell']}=\cdots$ so $t$ itself is repeating with a period $\ell'$, which is impossible as $t$ is the minimal repeating unit of $w$.
\end{proof}

\begin{theorem}
\label{thm:13}
Let $w$ be a periodic word generated by an injective $r$-uniform morphism $\mu$. Then, the minimal repeating unit of $w$ has no letter appearing twice.
\end{theorem}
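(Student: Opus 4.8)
The plan is to exploit the rigidity imposed on the minimal repeating unit $t$ by the morphism. Write $\ell=|t|$, so that $w=t^\omega$ and $w_i=t_{i\bmod\ell}$, and recall from Lemma 12 that $t$ is primitive. Since $w=\mu(w)$, comparing the first $r\ell$ letters of the two sides gives $\mu(t)=t^r$, and iterating this gives $\mu^k(t)=t^{r^k}$ for every $k$. Because $r\ge 2$, by replacing $\mu$ with a sufficiently large power of itself we may assume $r\ge\ell$; this is harmless, since $\mu^k$ is again an injective $r^k$-uniform morphism with $\mu^k(w)=w$, and the minimal repeating unit of $w$ does not depend on the morphism. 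Now read off the length-$r$ blocks of the identity $\mu(t)=t^r$: the block occupying positions $[rj,(j+1)r)$ equals on the one hand $\mu(t_j)$ and on the other hand $t_{rj\bmod\ell}\,t_{(rj+1)\bmod\ell}\cdots t_{(rj+r-1)\bmod\ell}$, so $\mu(t_j)$ is precisely the length-$r$ cyclic factor of $t$ beginning at position $rj\bmod\ell$.

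The next step converts ``equal images'' into ``congruent starting positions''. Since $r\ge\ell$, the length-$r$ cyclic factor of $t$ beginning at position $p$ has the cyclic rotation of $t$ by $p$ as a prefix, and because $t$ is primitive its $\ell$ rotations are pairwise distinct; hence two such cyclic factors are equal if and only if their starting positions agree modulo $\ell$. Let $d=\gcd(r,\ell)$. If $t_j=t_{j'}$ then $\mu(t_j)=\mu(t_{j'})$, so $rj\equiv rj'\pmod\ell$, i.e.\ $(\ell/d)\mid(j-j')$. Conversely, if $(\ell/d)\mid(j-j')$ then $rj\equiv rj'\pmod\ell$, so $\mu(t_j)=\mu(t_{j'})$, and since $\mu$ is injective (equivalently injective on letters, as $\mu$ is uniform) this forces $t_j=t_{j'}$. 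Therefore $t_j=t_{j'}$ if and only if $j\equiv j'\pmod{\ell/d}$.

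To finish, observe that this equivalence says precisely that $t$ has period $\ell/d$. If $d>1$ then $\ell/d<\ell$, so $t$ is a proper power and in particular equals its rotation by $\ell/d$, contradicting the primitivity of $t$ from Lemma 12; hence $d=1$. But then $t_j=t_{j'}$ forces $\ell\mid(j-j')$, which for indices in $\{0,1,\dots,\ell-1\}$ means $j=j'$. Thus the letters $t_0,\dots,t_{\ell-1}$ are pairwise distinct, which is exactly the claim.

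I expect the only real subtlety to be the middle paragraph: one must set up the block description of $\mu(t_j)$ carefully and invoke the ``a cyclic factor determines its starting position modulo $\ell$'' fact only in the regime $r\ge\ell$, where it is valid — which is the reason for passing to a power of $\mu$ at the outset. The two implications relating $t_j=t_{j'}$ to $j\equiv j'\pmod{\ell/d}$ — one from $\mu$ being well defined, the other from $\mu$ being injective — and the concluding appeal to primitivity are then immediate.
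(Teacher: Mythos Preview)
Your argument is correct, and its core mechanism coincides with the paper's: reduce to $r\ge\ell$ by passing to a power of $\mu$, observe that $\mu(t_j)$ is the length-$r$ cyclic factor of $t$ starting at $rj\bmod\ell$, and use primitivity of $t$ to conclude that equal images force $rj\equiv rj'\pmod\ell$.

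The one organizational difference worth noting is how $\gcd(r,\ell)=1$ is obtained. The paper establishes coprimality \emph{first}, by a separate period-pullback argument: $w=\mu(w)$ has period $\operatorname{lcm}(r,\ell)=r\cdot(\ell/d)$, and injectivity of $\mu$ lets one divide by $r$ to get period $\ell/d$ for $w$, contradicting minimality of $\ell$ unless $d=1$. Only afterwards does the paper run the ``equal letters $\Rightarrow$ equal cyclic factors $\Rightarrow$ congruent starting positions'' implication, which then immediately gives $j=j'$. You instead fold coprimality into the main argument by also using the \emph{converse} implication (injectivity on letters gives $rj\equiv rj'\pmod\ell\Rightarrow t_j=t_{j'}$), obtaining the full biconditional $t_j=t_{j'}\iff(\ell/d)\mid(j-j')$ and reading off $d=1$ from primitivity. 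Your packaging is slightly more unified; the paper's has the advantage that the coprimality step stands on its own and uses only one direction of the letter-level implication.
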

\begin{proof}

Suppose we have a periodic word $w$ generated by an $r$-uniform morphism $\mu$ applied to $0$. Let the period of $w$ be denoted $\ell$. Then, $r\ell$ is a non-minimal period for $w$. If $\frac{r\ell}{k}=\text{lcm}(r, \ell)< r\ell$, then $\frac{r\ell}{k}$ is a non-minimal period for $w$ for some $k$ dividing $\ell$. Then since $w=\mu(w)$ is periodic with period $r\frac{\ell}{k}$, the word $\mu^{-1}(w)=w$ must be periodic with period $\frac{\ell}{k}$ as the mapping $\mu$ is injective.

Hence, $\frac{\ell}{k}$ is a period for $w$, contradicting the minimality of $\ell$. So, $\ell$ and $r$ are relatively prime. If $\ell=r$ then we are done. Otherwise, let the length $\ell$ repeating unit be $t$.
We have two cases: the period $\ell$ is either less than or greater than $r$.\\

\noindent\textbf{Case 1}: $\ell < r$. Suppose that $t$ has a duplicate letter, say $w_i=w_j$ for $0 \le i < j < \ell$. Then, since $\mu(w)=w$, we have that $w_{[ir, (i+1)r]}=w_{[jr, (j+1)r]}$. In particular, $w_{[ir, ir+l]}=w_{[jr, jr+l]}$. However, since $t$ is primitive, we must have that $ir \equiv jr \mod \ell$ or $(j-i)r \equiv 0 \mod \ell$. But $0 < j-i < \ell$ and $r$ is relatively prime to $l$, so this is impossible. Therefore $t$ has no duplicate letters.\\

\noindent\textbf{Case 2}: $\ell > r$. We generalize the previous case. Suppose that $t$ has a duplicate letter, say $w_i=w_j$ for $0 \le i < j < \ell$. Then, for every $k$, we have that $w_{[ir^k, (i+1)r^k]}=w_{[jr^k, (j+1)r^k]}$. In particular, we can take $k$ such that $r^k \ge \ell$. Then, since $t$ is primitive, we must have $ir^k \equiv jr^k \mod \ell$, or $(j-i)r^k \equiv 0 \mod \ell$, which is impossible as $r^k$ is relatively prime to $l$ and $0 < j-i < l$.
\end{proof}

Therefore, for a periodic word generated by an injective uniform morphism, the repeating unit must consist of distinct letters. If the repeating unit has length $l$, then $w$ has $l$ distinct letters. Any periodic word starting with $0$ and consisting of a repeating unit of $l$ distinct letters can be generated by an $r$-uniform morphism as long as $r$ is relatively prime to $l$. For example, the word $012301230123\cdots$ can be written as $\mu^{\omega}(0)$ with $\mu(0)=01230, \mu(1)=12301, \mu(2)=23012, \mu(3)=30123$. So, except for a small class of exceptional words that we have characterized, all words generated by a injective uniform morphism are aperiodic and uniformly recurrent, and therefore satisfy the hypothesis of Theorem~\ref{thm:5}. Note that we have not classified which words generated by a noninjective uniform morphism satisfy the hypothesis of Theorem~\ref{thm:5}.

\section{Antipowers in the Fibonacci Word}
We prove that the Fibonacci word $\textbf{f}$, which is equal to $\varphi^{\omega}(0)$ for $\varphi(0)=01, \varphi(1)=0$ and thus pure morphic but not generated by a uniform morphism, also satisfies Conjecture~\ref{conj:3}. Let $\phi=\frac{1+\sqrt{5}}{2}$. An alternate characterization of the Fibonacci word is given below by the following well-known fact.

\begin{fact}
\label{fact:14}
The $n$-th digit in the Fibonacci word can be written as $2-(\lfloor (n+2)\phi \rfloor-\lfloor (n+1)\phi \rfloor)$.
\end{fact}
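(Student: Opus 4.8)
The plan is to verify that the sequence $g=(g_n)_{n\ge 0}$ with $g_n=2-(\lfloor(n+2)\phi\rfloor-\lfloor(n+1)\phi\rfloor)$ is exactly $\mathbf{f}$. Since $1<\phi<2$, the quantity $\lfloor(n+2)\phi\rfloor-\lfloor(n+1)\phi\rfloor$ is always $1$ or $2$, so $g_n\in\{0,1\}$ and $g$ is an infinite binary word, with $g_0=2-(\lfloor2\phi\rfloor-\lfloor\phi\rfloor)=2-(3-1)=0$. Because $\varphi$ is prolongable on $0$ and non-erasing, $\mathbf{f}$ is the unique infinite word fixed by $\varphi$ that begins with $0$: iterating $\varphi$ on any such word forces its prefix of length $|\varphi^k(0)|$ to equal $\varphi^k(0)$ for every $k$. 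Hence it suffices to show $\varphi(g)=g$.

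For that I would locate the block $\varphi(g_k)$ inside $\varphi(g)=\varphi(g_0)\varphi(g_1)\varphi(g_2)\cdots$. Its starting index is $P_k=\sum_{j<k}|\varphi(g_j)|=\sum_{j<k}(2-g_j)$, and telescoping $\sum_{j<k}g_j=2k-\big(\lfloor(k+1)\phi\rfloor-\lfloor\phi\rfloor\big)$ collapses this to $P_k=\lfloor(k+1)\phi\rfloor-1$. Since the blocks $\varphi(g_k)$ tile the index set, $\varphi(g)=g$ is equivalent to two local identities: (i) $g_{P_k}=0$ for every $k$, because both $\varphi(0)=01$ and $\varphi(1)=0$ begin with $0$; and (ii) $g_{P_k+1}=1$ whenever $g_k=0$, which is the second letter of $\varphi(0)$.

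Both identities reduce to controlling the fractional part $\delta:=\{(k+1)\phi\}$, and the only algebraic input needed is $\phi^2=\phi+1$ (equivalently $1/\phi+1/\phi^2=1$). From $1<\phi<2$ one gets $g_N=0\iff\{(N+1)\phi\}>1/\phi^2$, and from $\phi^2=\phi+1$ one computes $\{\lfloor(k+1)\phi\rfloor\,\phi\}=1-\delta/\phi$ and $\{(\lfloor(k+1)\phi\rfloor+1)\phi\}=(1-\delta)/\phi$. Substituting, identity (i) becomes the always-true inequality $\delta<1$, and identity (ii) becomes $\delta>1/\phi^2$, which is precisely the hypothesis $g_k=0$. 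This establishes $\varphi(g)=g$, hence $g=\mathbf{f}$.

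The only real obstacle is bookkeeping — pinning down the block positions $P_k$ and handling fractional parts without sign slips — since no genuinely hard ingredient is involved. One could alternatively route the argument through Beatty's theorem: the identity is equivalent to the statement that the $1$'s of $\mathbf{f}$ occupy the positions $\lfloor j\phi^2\rfloor-1$ for $j\ge 1$, after which the claimed formula follows by a one-line rewriting using $\phi=1+1/\phi$; but proving that Beatty description amounts to essentially the same morphism computation.
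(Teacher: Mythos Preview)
The paper does not prove this statement at all: it is introduced as ``the following well-known fact'' and is used as a black box in the proof of Proposition~16. So there is no argument in the paper to compare yours against.

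Your proof is correct. The uniqueness of the $\varphi$-fixed point beginning with $0$ is justified exactly as you say (prolongability plus non-erasure forces every prefix $\varphi^k(0)$), the telescoping giving $P_k=\lfloor(k+1)\phi\rfloor-1$ is right, and the two fractional-part computations $\{\lfloor(k+1)\phi\rfloor\,\phi\}=1-\delta/\phi$ and $\{(\lfloor(k+1)\phi\rfloor+1)\phi\}=(1-\delta)/\phi$ follow cleanly from $\phi^2=\phi+1$; substituting into the criterion $g_N=0\iff\{(N+1)\phi\}>1/\phi^2$ then gives (i) and (ii) as you claim. The alternative Beatty-sequence viewpoint you mention at the end is indeed the route most textbooks take (this is the standard ``mechanical word'' or ``characteristic Sturmian'' description), but your direct verification that the formula defines a $\varphi$-fixed point is a perfectly clean and self-contained way to do it.
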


Let the Fibonacci sequence be defined as $F_1=F_2=1$ and $F_n=F_{n-1}+F_{n-2}$ for $n \ge 3$.

\begin{lemma}
\label{lem:15}
Modulo $1$, the real number $F_n \phi$ is congruent to $-(-\phi)^{-n}$.

\end{lemma}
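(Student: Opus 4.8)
The plan is to prove this by induction on $n$, using the defining recurrence $F_n = F_{n-1} + F_{n-2}$ together with the fact that $\phi$ satisfies $\phi^2 = \phi + 1$, equivalently $-\phi^{-1} = 1 - \phi$ and more generally $(-\phi)^{-n}$ satisfies the same linear recurrence as $F_n\phi$ does. The cleanest route is to work with the conjugate $\psi = \frac{1-\sqrt 5}{2} = -\phi^{-1}$ and recall the Binet-type identity: since $F_n = \frac{\phi^n - \psi^n}{\sqrt 5}$, we get $F_n \phi = \frac{\phi^{n+1} - \psi^n \phi}{\sqrt 5}$. Writing $\phi = \frac{\sqrt 5 + 1}{2}$ and using $\phi\psi = -1$ so that $\psi^n \phi = -\psi^{n-1}$, I would simplify $F_n\phi$ into an integer plus $\psi^n$ (this is where $\phi + \psi = 1$ does the work of killing the $\phi^{n+1}$ term modulo $1$), and then observe $\psi^n = (-\phi^{-1})^n = -(-\phi)^{-n}\cdot(-1)^{?}$ — more precisely $\psi^n = (-1)^n \phi^{-n}$ while $-(-\phi)^{-n} = -(-1)^n\phi^{-n}$, so I must be careful about the sign; I expect the correct statement to come out with $\psi^n$, and the lemma's ``$-(-\phi)^{-n}$'' should match $\psi^n$ after unwinding $(-\phi)^{-n} = (-1)^{-n}\phi^{-n} = (-1)^n\phi^{-n}$, giving $-(-\phi)^{-n} = -(-1)^n\phi^{-n}$. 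Since $\psi^n = (-1)^n\phi^{-n}$, these agree only when... I would recheck indices here; the resolution is likely that one uses $F_n\phi \equiv \psi^n \pmod 1$ is itself equal to $-(-\phi)^{-n}$ once one notes $(-\phi)^{-n} = (-1)^n \phi^{-n}$ is wrong and instead $(-\phi)^{-1} = -\phi^{-1} = \psi$, so $(-\phi)^{-n} = \psi^n$ directly, hence $-(-\phi)^{-n} = -\psi^n$. So I'd expect to actually prove $F_n\phi \equiv -\psi^n = -(-\phi)^{-n} \pmod 1$, matching the statement exactly.

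Concretely, the induction: for the base cases compute $F_1\phi = \phi \equiv \phi - 1 = -\phi^{-1}\cdot(-1)\cdot(-1)$, i.e. $\phi - 1 = \frac{\sqrt5 - 1}{2}$, and check this equals $-(-\phi)^{-1} = -\psi = \frac{\sqrt 5 - 1}{2}$; similarly verify $n=2$ with $F_2\phi = \phi \equiv \phi - 1$ and $-(-\phi)^{-2} = -\psi^2 = -\frac{3-\sqrt5}{2} = \frac{\sqrt5 - 3}{2}$, noting $\phi - 1 = \frac{\sqrt5-1}{2}$ and $\frac{\sqrt5-1}{2} - 1 = \frac{\sqrt5 - 3}{2}$, so these agree mod $1$. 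For the inductive step, assume $F_{n-1}\phi \equiv -(-\phi)^{-(n-1)}$ and $F_n\phi \equiv -(-\phi)^{-n}$ modulo $1$. Then $F_{n+1}\phi = F_n\phi + F_{n-1}\phi \equiv -(-\phi)^{-n} - (-\phi)^{-(n-1)} \pmod 1$, and I need this to equal $-(-\phi)^{-(n+1)}$ modulo $1$. This reduces to the identity $(-\phi)^{-(n+1)} = (-\phi)^{-n} + (-\phi)^{-(n-1)}$, i.e. $(-\phi)^{-(n-1)}\big((-\phi)^{-2} - (-\phi)^{-1} - 1\big) = 0$, which holds iff $(-\phi)^{-2} = (-\phi)^{-1} + 1$, equivalently $\phi^{-2} = -\phi^{-1} + 1$, i.e. $1 = -\phi + \phi^2 = \phi^2 - \phi$, which is exactly $\phi^2 = \phi + 1$. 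So the recurrence for the fractional parts is driven entirely by the minimal polynomial of $\phi$.

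The only genuine subtlety — and the step I'd expect to be the main obstacle — is the passage from an exact identity of real numbers to a congruence modulo $1$: the inductive step gives $F_{n+1}\phi = (F_n\phi) + (F_{n-1}\phi)$ as real numbers, and each summand is congruent mod $1$ to the relevant power of $-\phi^{-1}$, but adding two congruences mod $1$ is fine, and the target identity $(-\phi)^{-(n+1)} = (-\phi)^{-n} + (-\phi)^{-(n-1)}$ is an exact real identity, so the congruence is preserved. The one thing to watch is that $|(-\phi)^{-n}| < 1$ for all $n \ge 1$ (since $\phi > 1$), so $-(-\phi)^{-n}$ is a genuine representative of the residue class in $(-1,1)$ and there is no ambiguity; I'd state this explicitly so the claim ``congruent to $-(-\phi)^{-n}$'' is unambiguous. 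With these pieces the proof is a short induction, and I would present it as: verify $\phi^2 = \phi+1$ implies the power identity, check two base cases, then invoke additivity of congruences.
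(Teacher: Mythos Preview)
Your inductive argument is correct: the base cases check out, and the inductive step reduces to $\psi^{n+1}=\psi^n+\psi^{n-1}$ where $\psi=(-\phi)^{-1}=\frac{1-\sqrt5}{2}$, which holds because $\psi$ satisfies $x^2=x+1$. (The early sign-chasing is unnecessary once you observe $(-\phi)^{-1}=\psi$ directly, so $(-\phi)^{-n}=\psi^n$ with no ambiguity.)

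The paper takes a different and shorter route: it uses Binet's formula once to obtain the \emph{exact} identity $F_n\phi = F_{n+1} - (-\phi)^{-n}$, from which the congruence modulo $1$ is immediate. Concretely, from $F_n=\frac{1}{\sqrt5}(\phi^n-(-\phi)^{-n})$ one multiplies by $\phi$, regroups to recognize $F_{n+1}$, and simplifies the leftover $\frac{1}{\sqrt5}\bigl((-\phi)^{-n+1}+(-\phi)^{-n-1}\bigr)$ to $-(-\phi)^{-n}$. This buys a one-line proof and, as a bonus, the stronger statement that $F_n\phi$ differs from an explicit integer by exactly $-(-\phi)^{-n}$, which is convenient later when one wants to know not just the residue but its sign and size. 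Your induction is more elementary (no Binet needed) and makes transparent that the result is driven purely by the shared linear recurrence of $F_n$ and $\psi^n$; either approach is fine for this lemma.
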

\begin{proof}
By Binet's formula, $F_n=\frac{1}{\sqrt{5}}(\phi^n-(-\phi)^{-n})$ is an integer. So, \begin{align*}F_n\phi=&\frac{1}{\sqrt{5}}(\phi^{n+1}+(-\phi)^{-n+1})\\=&F_{n+1}+\frac{1}{\sqrt{5}}((-\phi)^{-n+1}+(-\phi)^{-n-1})\\=&F_{n+1}-(-\phi)^{-n}.\end{align*} \end{proof}

Denote the fractional part of $x$ as $\{x\}$.

\begin{proposition}
\label{prop:16}
Fix a positive integer $n$. Given a positive integer $\ell$, if $$\phi^{-n+1}\le \min(\{\ell \cdot 2F_n\phi\}, 1-\{\ell \cdot 2F_n\phi\}),$$ then $$\emph{\textbf{f}}_{[x, x+2F_n]} \neq \emph{\textbf{f}}_{[x+\ell \cdot 2F_n, x+\ell \cdot 2F_n+2F_n]}$$ for all indices $x \in \mathbb{N}$.
\end{proposition}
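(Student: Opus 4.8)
The plan is to pass to the standard Sturmian/rotation picture for the Fibonacci word and then combine the three-gap (Steinhaus) theorem with Lemma 19. First rewrite Fact 20: since $1<\phi<2$, the integer $\lfloor(j+2)\phi\rfloor-\lfloor(j+1)\phi\rfloor$ equals $\lfloor\{(j+1)\phi\}+\phi\rfloor\in\{1,2\}$, and it equals $2$ exactly when $\{(j+1)\phi\}\ge 2-\phi$; hence $\mathbf{f}_j=1$ if and only if $\{(j+1)\phi\}\in I$, where $I:=[0,2-\phi)$ is an arc of length $2-\phi=\phi^{-2}$ on the circle $\mathbb{R}/\mathbb{Z}$. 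Write $\theta:=\{\ell\cdot 2F_n\phi\}$. Since $\{(x+\ell\cdot 2F_n+t+1)\phi\}=\{\{(x+t+1)\phi\}+\theta\}$, the two blocks $\mathbf{f}_{[x,x+2F_n]}$ and $\mathbf{f}_{[x+\ell\cdot 2F_n,\,x+\ell\cdot 2F_n+2F_n]}$ coincide if and only if, for every $t\in\{0,1,\dots,2F_n-1\}$, the points $\{(x+t+1)\phi\}$ and $\{(x+t+1)\phi\}+\theta$ lie on the same side of the partition $\{I,\,(\mathbb{R}/\mathbb{Z})\setminus I\}$; equivalently, if and only if none of the $2F_n$ consecutive orbit points $\{(x+1)\phi\},\dots,\{(x+2F_n)\phi\}$ lies in $E_\theta:=I\,\triangle\,(I-\theta)$, where $I-\theta$ denotes $I$ rotated by $-\theta$. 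So it suffices to place one of these $2F_n$ orbit points inside $E_\theta$.

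Now I would describe $E_\theta$: taking the symmetric difference of the length-$(2-\phi)$ arc $I$ with its rotate, one finds that $E_\theta$ is a disjoint union of two arcs, each of length $\min\bigl(\min(\theta,1-\theta),\,2-\phi\bigr)$. The hypothesis $\phi^{-n+1}\le\min(\theta,1-\theta)$ cannot hold for $n\le 2$ (the right side is $\le\tfrac12$), so assume $n\ge 3$; then $\phi^{-n+1}\le\phi^{-2}=2-\phi$, so each component of $E_\theta$ has length $\ge\phi^{-n+1}$. On the other hand, the $2F_n$ orbit points are a rotation of $\{\{\phi\},\dots,\{2F_n\phi\}\}$, so their gap lengths depend only on $2F_n$; since $F_{n+1}\le 2F_n<F_{n+2}$, the three-gap theorem together with Lemma 19 (which gives $\|F_k\phi\|=\phi^{-k}$ for $k\ge 2$) shows that the largest gap between consecutive orbit points equals $\|F_{n-1}\phi\|=\phi^{-n+1}$. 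Hence any arc of length strictly greater than $\phi^{-n+1}$ must contain an orbit point, which finishes the proof except possibly when both components of $E_\theta$ have length exactly $\phi^{-n+1}$ --- which forces $\min(\theta,1-\theta)=\phi^{-n+1}$, or else $2-\phi=\phi^{-n+1}$, i.e.\ $n=3$.

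For the degenerate case $\min(\theta,1-\theta)=\phi^{-n+1}$, I would use irrationality: from the identity $(-\phi)^{-(n-1)}=F_n-F_{n-1}\phi$ appearing in the proof of Lemma 19 we get $\phi^{-n+1}=\pm(F_n-F_{n-1}\phi)$, so $\{\ell\cdot 2F_n\phi\}\in\{\phi^{-n+1},\,1-\phi^{-n+1}\}$ would produce an integer relation $(2\ell F_n\pm F_{n-1})\phi\in\mathbb{Z}$; but $2\ell F_n\pm F_{n-1}\ne 0$ (since $2\ell F_n\ge 2F_n>F_{n-1}$), contradicting the irrationality of $\phi$. For $n=3$, the $2F_3=4$ consecutive orbit points have exactly one gap of the maximal length $\phi^{-2}$ (again by the three-gap theorem and a length count), whereas the two components of $E_\theta$ are disjoint arcs of length $\phi^{-2}$; they cannot both be contained in gaps, so at least one contains an orbit point. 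In all cases some orbit point lies in $E_\theta$, so the blocks are distinct. The one genuinely delicate point is exactly this boundary situation, where the arcs forming $E_\theta$ are no longer than the largest gap in the orbit; it is dispatched by the arithmetic of $\phi$ together with the sharp three-gap count in the single case $n=3$, while everything else is a mechanical translation of block equality into the Sturmian coding.
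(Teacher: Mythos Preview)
Your argument is correct, and its overall shape---pass to the rotation picture and compare an arc length against the maximal gap among the relevant orbit points---is the same as the paper's. The details differ in two places. First, the paper does not code by the single interval $I=[0,2-\phi)$; instead it keeps the $2F_n+1$ points $\{(x+1)\phi\},\dots,\{(x+2F_n+1)\phi\}$ and observes that block equality forces every \emph{consecutive pair} to behave the same under the shift by $\theta$, hence all points wrap or none do, so the complementary arc (of length $\theta$ or $1-\theta$) is empty. Second, the paper bounds the maximal gap directly (each point has neighbours at circle distance $\phi^{-n}$ and $\phi^{-n+1}$ obtained by shifting the index by $F_n$ and $F_{n-1}$), rather than invoking the three-gap theorem. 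Your route via $E_\theta$ and Steinhaus is a clean alternative; the price is the boundary discussion, which the paper sidesteps. In fact that discussion can be shortened: the endpoints of $I$ and of $I-\theta$ are never orbit points (each such equality would force an integer multiple of $\phi$ to be an integer), so an arc of length exactly $\phi^{-n+1}$ containing no orbit point would have to sit strictly inside a gap of length $>\phi^{-n+1}$, which does not exist---this disposes of both your case $\min(\theta,1-\theta)=\phi^{-n+1}$ and the $n=3$ case at once.
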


\begin{proof}
For a block $\textbf{f}_{[x, x+2F_n]}$, consider the fractional parts of the $2F_n+1$ numbers $(x+1)\phi, (x+2)\phi, \dots, (x+2F_n+1)\phi$. Whether the fractional part of $(i+2)\phi$ is greater than or less than the fractional part of $(i+1)\phi$ determines whether or not the $i$-th digit of $\textbf{f}$ is $0$ or $1$ by Fact~\ref{fact:14}.

Now, when we add $\ell \cdot 2F_n$ to the block $[x, x+2F_n)$ for some positive integer $\ell$, we are shifting the numbers $(x+1)\phi, \dots, (x+2F_n+1)\phi$ by $\ell \cdot 2F_n\phi$. So, we are adding the fractional part of $\ell \cdot 2F_n\phi$ to the numbers $\{(x+1)\phi\}, \{(x+2)\phi\}, \dots, \{(x+2F_n+1)\phi\}$, and then subtracting $1$ from the numbers that are now at least $1$. If $\{(x+\ell \cdot 2F_n)\phi\}=\{x\phi\}+\{(\ell \cdot 2F_n)\phi\}-1$, then we say that $\{x\phi\}$ \textit{wraps around}. Note that $\{x\phi\}$ wraps around if and only if $\{x\phi\} \in [1-\{\ell \cdot 2F_n \phi\}, 1)$.

If both $\{(i+2)\phi\}$ and $\{(i+1)\phi\}$ wrap around or both don't wrap around when adding $\ell\cdot 2F_n \phi$, then the $(i+\ell \cdot 2F_n)$-th digit is the same as the $i$-th digit; otherwise, it is different. If the two digits are the same for all $i$ with $x \le i < x+2F_n$, then either every such fractional part $\{i\phi\}$ for $x+1 \le i \le x+2F_n+1$ wraps around, meaning that they all belong to $\{x\phi\} \in [1-\{\ell \cdot 2F_n \phi\}, 1)$, or every such fractional part doesn't wrap around, meaning that they all belong to $[0, 1-\{\ell \cdot 2F_n \phi\})$. We will show that neither of these two conditions are possible under the assumption on $\ell$, which will prove the proposition.

We consider the set of points $S=\{\{(x+1)\phi\}, \{(x+2)\phi\}, \dots, \{(x+2F_n+1)\phi\}\}$ on the circle $[0, 1]$ where we identify $0$ and $1$. If all elements in $S$ wrap around, then the largest gap between two consecutive points in $S$ is at least $1-\{\ell \cdot 2F_n\phi\}$, as the interval $[0, 1-\{\ell \cdot 2F_n\phi\})$ contains no points in $S$. If no element in $S$ wraps around, then the largest gap between two consecutive points in $S$ is at least $\{\ell \cdot 2F_n\phi\}$, as the interval $[1-\{\ell \cdot 2F_n\phi\}, 1)$ contains no points in $S$. Thus the largest gap between two consecutive points of $S$ being at least $\min(\{\ell \cdot 2F_n\phi\}, 1-\{\ell \cdot 2F_n\phi\})$ is a necessary (but not sufficient) condition for the $i$-th and $(i+\ell \cdot 2F_n)$-th digits of the Fibonacci word to be the same for all $x \le i < x+2F_n$.

Now, we claim that the largest gap between any two consecutive points in $S$ is at most $\phi^{-n+1}$. This is because if $i \le x+F_n+1$, then the distance between $\{i \phi\}$ and $\{(i+F_n)\phi\}$ is $\phi^{-n}$, and the distance between $\{i\phi\}$ and $\{(i+F_{n-1})\phi\}$ is $\phi^{-n+1}$. In fact, the residue of $i\phi$ modulo $1$ is between those of $(i+F_n)\phi$ and $(i+F_{n-1})\phi$. Similarly, if we look at $i \ge x+F_n+1$, then $i\phi$ is close to and between $(i-F_n)\phi$ and $(i-F_{n-1})\phi$ modulo $1$. Therefore, every point in $S$ has another point in $S$ to its left and right at most $\phi^{-n+1}$ away, so the largest gap between two consecutive points of $S$ is at most $\phi^{-n+1}$.

Then if $\phi^{-n+1}\le \min(\{\ell \cdot 2F_n\phi\}, 1-\{\ell \cdot 2F_n\phi\})$, the largest gap between two consecutive points in $S$ is not at least $\min(\{\ell \cdot 2F_n\phi\}, 1-\{\ell \cdot 2F_n\phi\})$, so the necessary condition for the blocks to be equal is not satisfied and $$\textbf{f}_{[x, x+2F_n]} \neq \textbf{f}_{[x+\ell \cdot 2F_n, x+\ell \cdot 2F_n+2F_n]}.$$
\end{proof}

\begin{proposition}
\label{prop:17}
At any index in \emph{\textbf{f}} and any positive integer $n$, there is an $\lfloor F_n \frac{\sqrt{5}}{2} \rfloor$-antipower starting at that index with block length $2F_n$.
\end{proposition}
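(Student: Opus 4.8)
The plan is to exhibit the claimed antipower explicitly. Put $k=\lfloor F_n\sqrt5/2\rfloor$, and at the given index $x$ take the $k$ consecutive length-$2F_n$ blocks
\[
B_j=\textbf{f}_{[x+2jF_n,\;x+2(j+1)F_n]},\qquad j=0,1,\dots,k-1.
\]
Their concatenation is exactly $\textbf{f}_{[x,\,x+2kF_n]}$, so it is enough to show that $B_0,\dots,B_{k-1}$ are pairwise distinct. If $k=1$ there is nothing to prove (this disposes of $n=1,2$), and since $F_n\sqrt5/2\ge k\ge 2$ forces $F_n\ge 2$ and hence $n\ge 3$, I will assume $n\ge 3$ from now on.

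Fix $0\le i<j\le k-1$ and set $\ell=j-i$, so $1\le\ell\le k-1$. Applying Proposition 15 with its index $x$ replaced by $x+2iF_n$ shows $B_i\neq B_j$ as soon as
\[
\phi^{-n+1}\le\min\bigl(\{\ell\cdot 2F_n\phi\},\;1-\{\ell\cdot 2F_n\phi\}\bigr).
\]
To verify this I will invoke Lemma 13: from $F_n\phi\equiv-(-\phi)^{-n}\pmod 1$ we get $\ell\cdot 2F_n\phi\equiv -2\ell(-\phi)^{-n}\pmod 1$, so the right-hand side above equals $\lVert 2\ell(-\phi)^{-n}\rVert=\lVert 2\ell\phi^{-n}\rVert$, where $\lVert y\rVert$ denotes the distance from $y$ to the nearest integer (the two quantities agree because $|(-\phi)^{-n}|=\phi^{-n}$). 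Thus the whole proposition reduces to the estimate $\lVert 2\ell\phi^{-n}\rVert\ge\phi^{-n+1}$ for all $\ell$ with $1\le\ell\le k-1$.

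This estimate is elementary. From below, $2\ell\phi^{-n}\ge 2\phi^{-n}>\phi\cdot\phi^{-n}=\phi^{-n+1}$ since $2>\phi$. From above I need $2\ell\phi^{-n}\le 1-\phi^{-n+1}$, equivalently $2\ell\le\phi^n-\phi$; by Binet's formula $k\le F_n\sqrt5/2=\tfrac12\bigl(\phi^n-(-\phi)^{-n}\bigr)$, so it suffices that $\phi^n-(-\phi)^{-n}-2\le\phi^n-\phi$, i.e. $(-\phi)^{-n}\ge\phi-2=-\phi^{-2}$; this holds because $(-\phi)^{-n}>0$ when $n$ is even, while $(-\phi)^{-n}=-\phi^{-n}\ge-\phi^{-3}>-\phi^{-2}$ when $n\ge 3$ is odd. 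Hence $\phi^{-n+1}\le 2\ell\phi^{-n}\le 1-\phi^{-n+1}<1$, and therefore $\lVert 2\ell\phi^{-n}\rVert=\min(2\ell\phi^{-n},\,1-2\ell\phi^{-n})\ge\phi^{-n+1}$, as wanted. The one place where a little care is needed is precisely this last bound — turning the floor $\lfloor F_n\sqrt5/2\rfloor$ into a clean inequality about $\phi^n$ via Binet's formula; the rest is routine manipulation layered on top of Proposition 15 and Lemma 13. Carrying this out for every pair $(i,j)$ makes $B_0,\dots,B_{k-1}$ pairwise distinct, so $\textbf{f}_{[x,\,x+2kF_n]}$ is a $k$-antipower of block length $2F_n$ beginning at $x$, which is Proposition 16.
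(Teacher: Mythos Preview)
Your proof is correct and follows essentially the same approach as the paper: reduce to the hypothesis of the preceding proposition, use the lemma $F_n\phi\equiv-(-\phi)^{-n}\pmod 1$ to identify $\min(\{2\ell F_n\phi\},1-\{2\ell F_n\phi\})$ with $\lVert 2\ell\phi^{-n}\rVert$, and then bound this quantity from both sides. The only difference is cosmetic---the paper routes the upper bound through the identity $\lfloor\phi^n/2\rfloor=\lfloor F_n\sqrt5/2\rfloor$, whereas you go directly via Binet's formula and the inequality $(-\phi)^{-n}\ge-\phi^{-2}$ for $n\ge3$, which is a slight streamlining but not a different idea.
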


\begin{proof}
Let $x$ be any index in \textbf{f}. We consider the $\lfloor \frac{\sqrt{5}}{2}F_n\rfloor$ blocks of the form $\textbf{f}_{[x+\ell \cdot 2F_n, x+(\ell+1)\cdot 2F_n]}$, where $0 \le \ell <  \lfloor \frac{\sqrt{5}}{2}F_n\rfloor$, and show that they are pairwise different. The distance between any two blocks is $m \cdot 2F_n$ for some $0 \le m < \lfloor \frac{\sqrt{5}}{2}F_n\rfloor$. We aim to show that all such $m$ satisfy the hypothesis of Proposition~\ref{prop:16}, namely that $$\phi^{-n+1}\le \min(\{m \cdot 2F_n\phi\}, 1-\{m \cdot 2F_n\phi\}).$$

We have that $\phi^{-n+1} \le 2\phi^{-n}=\min(\{2F_n\phi\}, \{(1-2F_n\phi)\})$, with the latter equality by Lemma~\ref{lem:15}. Then for $m \le \lfloor \frac{1}{2\phi^{-n}} \rfloor $, we have
$$\min(\{m \cdot 2F_n\phi\}, \{(1-m \cdot 2F_n\phi)\})=\min(m \cdot 2\phi^{-n}, 1-m \cdot 2\phi^{-n}).$$
Since for $m \le \lfloor \frac{1}{2\phi^{-n}} \rfloor-1$ we have $m \cdot 2\phi^{-n}>\phi^{-n+1}$ and $1-m \cdot 2\phi^{-n} > 2\phi^{-n}>\phi^{-n+1}$, the hypothesis of Proposition~\ref{prop:16} is indeed satisfied for such $m$.

Therefore, at every index, there is a length-$\lfloor \frac{\phi^n}{2} \rfloor$ antipower starting at that index with block length $2F_n$. We have $\lfloor \frac{\phi^n}{2} \rfloor= \lfloor \frac{\sqrt{5}}{2}F_n+\frac{(-\phi)^{-n}}{2}\rfloor$. Now, if $F_n$ is even, then the distance from $\frac{\sqrt{5}}{2}F_n=(\phi-\frac{1}{2})F_n$ and the nearest integer is $\phi^{-n}$, and if $F_n$ is odd, then the distance is $\frac{1}{2}-\phi^{-n}$. In particular, the distance from $\frac{\sqrt{5}}{2}F_n$ to the nearest integer is greater than $\frac{\phi^{-n}}{2}$, which means that 
$\lfloor \frac{\sqrt{5}}{2}F_n+\frac{(-\phi)^{-n}}{2}\rfloor = \lfloor \frac{\sqrt{5}}{2}F_n\rfloor,$ and we are done. \end{proof}

We now give the proof that there is a linear bound on antipowers in the Fibonacci word.

\begin{proof}[of Theorem~\ref{thm:6}]
Let $n$ be the smallest integer such that $\lfloor F_n \frac{\sqrt{5}}{2} \rfloor$ is at least $k$. Then, starting at any given index $a$ of \textbf{f}, there is a $k$-antipower with block length $2F_n$ by Proposition~\ref{prop:17}. We have $F_n=\phi F_{n-1}+(-\phi)^{-n}$, and that $\lfloor F_{n-1}\frac{\sqrt{5}}{2} \rfloor \le k-1$, so \begin{align*}\frac{2F_n}{k} =& \frac{2F_n}{F_n\sqrt{5}/2}\frac{F_n}{F_{n-1}}\frac{F_{n-1}\sqrt{5}/2}{k}\\=& \frac{4}{\sqrt{5}}\left(\phi + \frac{(-\phi)^{-n}}{F_{n-1}}\right)\frac{F_{n-1}\sqrt{5}/2}{k}\\=& \frac{4}{\sqrt{5}}\phi\left(1 - \frac{(-\phi)^{-n-1}}{F_{n-1}}\right)\frac{F_{n-1}\sqrt{5}/2}{k}.\end{align*} Since $F_{n-1}\sqrt{5}/2$ is less than and at least $(\phi)^{-n+1}$ away from $k$, and $\frac{F_{n-1}\sqrt{5}}{2} > F_{n-1}$, we have that $\frac{F_{n-1}\sqrt{5}/2}{k} < 1 + \frac{(\phi)^{-n+1}}{F_{n-1}}$, so $\frac{2F_n}{k} < \frac{4}{\sqrt{5}}\phi$.
\end{proof}

Recall that a word is Sturmian if there are exactly $k+1$ distinct substrings of length $k$ in $f$ for all $k$. As the Fibonacci word is a Sturmian word (see \cite{algcombo}), we cannot have a $k$-antipower starting at any index with block length less than $k-1$. So, if we let $\gamma_i(k)$ be the smallest block length that starts a $k$-antipower at index $i$, we have that for any $i$,

$$1 \le \liminf\limits_{k \rightarrow \infty} \frac{\gamma_i(k)}{k} \le \frac{4}{\sqrt{5}},$$ $$ 1 \le \limsup\limits_{k \rightarrow \infty}\frac{\gamma_i(k)}{k} \le \frac{4}{\sqrt{5}}\phi.$$

The reasoning for these bounds is as follows: for all $k$ and all indices $i$, by Theorem~\ref{thm:6} we have that there is a $k$-antipower starting at $i$ with block length at most $\frac{4}{\sqrt{5}}\phi$, so $\gamma_i(k) \le \frac{4}{\sqrt{5}}{\phi}$ for all $k$. This fact implies the weaker condition of the upper bound on the $\limsup$ of $\frac{\gamma_i(k)}{k}$. Furthermore, for infinitely many $k$, specifically those $k$ equal to $\lfloor F_n \frac{\sqrt{5}}{2} \rfloor$, we have that there is a $k$-antipower with block length $2F_n$, and $\frac{2F_n}{\lfloor F_n \frac{\sqrt{5}}{2} \rfloor}$ approaches $\frac{4}{\sqrt{5}}$ for large $n$, giving the upper bound on the $\liminf$ of $\frac{\gamma_i(k)}{k}$. The lower bounds follow from the fact that $\gamma_i(k) \ge k-1$, so $\frac{\gamma_i(k)}{k} \ge 1-\frac{1}{k}$, a number that approaches $1$.

Based on empirical data, we conjecture the following.

\begin{conjecture}
\label{conj:18}
Let $F_n$ be an even Fibonacci number. Then, there is an $(F_n-1)$-antipower with block length $\frac{F_n}{2}+F_{n-1}$ that is a prefix of \emph{\textbf{f}}.
\end{conjecture}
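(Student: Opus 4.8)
Set $L=\tfrac{F_n}{2}+F_{n-1}$ (for $n\ge 6$; the case $n=3$ is trivial). Since $F_n$ is even we have $3\mid n$; writing $L_n=F_{n+1}+F_{n-1}$ for the $n$-th Lucas number, $L=\tfrac12 L_n$. The plan is to run the argument of Propositions~16--17 along the prefix only. As in the proof of Proposition~16, the prefix $\textbf{f}_{[0,(F_n-1)L]}$ fails to be an $(F_n-1)$-antipower with block length $L$ if and only if there are an $m\in\{1,\dots,F_n-2\}$ and a multiple $x=\ell L$ of $L$ (with $0\le\ell\le F_n-2-m$) such that the $L+1$ points $S_x=\{\{(x+1)\phi\},\dots,\{(x+L+1)\phi\}\}$ on $\mathbb{R}/\mathbb{Z}$ all lie in $[0,1-\{mL\phi\})$, or all lie in $[1-\{mL\phi\},1)$; I would rule this out one difference class $m$ at a time.

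First I would compute the shift amounts. From Lemma~15 (equivalently Binet's formula, together with the parity fact that $F_{n+1}$ is odd) one gets $L\phi\equiv\tfrac12+(-1)^n\delta\pmod 1$ with $\delta=\tfrac{\sqrt5}{2}\phi^{-n}$, hence $\{mL\phi\}=\{\tfrac m2+(-1)^nm\delta\}$; in the range $1\le m\le F_n-2$ (where $m\delta<\tfrac12$) this gives $\min(\{mL\phi\},1-\{mL\phi\})=\tfrac12-m\delta$ for $m$ odd and $=m\delta$ for $m$ even. Next, since $F_n<L+1<F_{n+1}$, the three-distance theorem shows that every $S_x$ has largest gap exactly $\phi^{-n+2}$. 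As in Proposition~16, if $S_x$ lies entirely in one of the two complementary arcs $[0,1-\{mL\phi\})$, $[1-\{mL\phi\},1)$, then it has a gap of length at least $\min(\{mL\phi\},1-\{mL\phi\})$, so the two blocks must differ for every $x$ whenever $\min(\{mL\phi\},1-\{mL\phi\})>\phi^{-n+2}$. A short computation using $\tfrac{3\sqrt5}{2}>\phi^2$ and $2\sqrt5>\phi^2$ confirms this for all odd $m\le F_n-3$ and all even $m$ with $4\le m\le F_n-2$. Because $\sqrt5<\phi^2$, exactly one difference class survives: $m=2$.

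For $m=2$ I would argue by hand. One first checks that $L+1$ distinct points $\{j\phi\}$ whose indices $j$ run over $L+1$ consecutive integers cannot all lie in an arc of length $\sqrt5\phi^{-n}$: otherwise two of them would be closer than $\phi^{-n}$, which is the minimum over $1\le d<F_{n+1}$ of the distance from $d\phi$ to $\mathbb{Z}$. Consequently, with $T=\{\,j\ge 1:\{j\phi\}\in[1-\sqrt5\phi^{-n},1)\,\}$ when $n$ is even and $T=\{\,j\ge 1:\{j\phi\}\in(0,\sqrt5\phi^{-n})\,\}$ when $n$ is odd (these two cases being mirror images under $j\mapsto-j$ on the circle), the block at $x=\ell L$ and the block at $x+2L$ are distinct if and only if the window $[\ell L+1,(\ell+1)L+1]$ contains an element of $T$. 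Thus the whole theorem reduces to the combinatorial claim: \emph{for every $\ell$ with $0\le\ell\le F_n-4$, the window $[\ell L+1,(\ell+1)L+1]$ meets $T$.}

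This last claim is where I expect essentially all of the difficulty to lie. By Slater's theorem $T$ is a Fibonacci-like set: its least element is $F_n$ and the gaps between consecutive elements take only the three values $F_{n-1},F_n,F_{n+1}$. Since the longest gap $F_{n+1}$ exceeds $L+1$, one cannot merely bound gap lengths; the point is that the windows are anchored at the progression $L\mathbb{Z}$, and $L=\tfrac12 L_n$ is chosen precisely so that the left endpoint $t$ of every long ($F_{n+1}$-)gap of $T$ satisfies $t\not\equiv 0\pmod L$ and $t\bmod L\le F_{n-1}+1$, which is exactly the condition that no multiple of $L$ lands in $[t,\,t+F_{n+1}-L-2]$ and hence that no window is swallowed by that gap. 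I would establish this residue condition by pinning down exactly which elements of $T$ begin a long gap (those whose Zeckendorf representation ends in a prescribed block) and computing their residues modulo $L$; this is the only place where the specific block length $\tfrac{F_n}{2}+F_{n-1}$ is used, and the bookkeeping --- most cleanly done in Zeckendorf/Ostrowski coordinates for both the indices $\ell L$ and the elements of $T$, or via an induction through the renormalization $\textbf{f}=\varphi^3(\textbf{f})$ (legitimate since $3\mid n$) that reduces the $n$-case to the $(n-3)$-case with small $n$ as base --- is the part I expect to take the most effort to make airtight.
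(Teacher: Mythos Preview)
There is nothing in the paper to compare against: the statement you are trying to prove is \emph{Conjecture~18}, which the paper explicitly leaves open (``Based on empirical data, we conjecture the following\ldots\ If this conjecture were true, we would have\ldots''). So any correct argument here would be new, not a reproduction of the paper's proof.

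As for the proposal itself, your reduction is sound as far as it goes: the computation $L\phi\equiv\tfrac12+(-1)^n\tfrac{\sqrt5}{2}\phi^{-n}\pmod 1$ is correct, and the Proposition~16-style gap argument genuinely disposes of all difference classes $m$ except $m=2$. But the proof is not complete. The entire content of the conjecture is concentrated in the $m=2$ case, and there you only \emph{state} what would need to be shown --- that every window $[\ell L+1,(\ell+1)L+1]$ for $0\le\ell\le F_n-4$ meets the Slater set $T$ --- and sketch in words how one might attack it (Zeckendorf/Ostrowski bookkeeping, or a $\varphi^3$-renormalization). You yourself flag this as ``the part I expect to take the most effort to make airtight,'' and indeed nothing in the write-up actually establishes the residue condition on the left endpoints of long gaps of $T$, nor even verifies it for a single nontrivial $n$. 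Since the long gaps of $T$ have length $F_{n+1}>L+1$, a window \emph{can} in principle be swallowed, so this step is not a formality; it is the conjecture. Until that combinatorial claim is proved (or at least one of your two suggested routes is carried out in full), what you have is a promising reduction, not a proof.
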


If this conjecture were true, we would have the following:

$$1 \le \liminf\limits_{k \rightarrow \infty} \frac{\gamma_0(k)}{k} \le \frac{\sqrt{5}}{2}.$$

\acknowledgements
The author would like to thank Prof.\ Joe Gallian for organizing the Duluth REU where this research took place, as well as advisors Aaron Berger and Colin Defant for suggesting the problem. The author would also like to thank Defant, Gallian, and Shyam Narayanan for providing helpful comments on drafts of this paper.

\nocite{*}
\bibliographystyle{abbrvnat}
\bibliography{biblio}

\end{document}